\newtheorem{theorem}{Theorem}[section]
\newtheorem{lemma}[theorem]{Lemma}
\newtheorem{observation}[theorem]{Observation}
\newtheorem{conjecture}[theorem]{Conjecture}
\theoremstyle{remark}
\newtheorem{notation}[theorem]{Notation}
\newtheorem{remark}[theorem]{Remark}
\newtheorem{definition}[theorem]{Definition}
\newtheorem{example}[theorem]{Example}
\newcommand{\cf}{\mathcal F}
\newcommand{\ch}{\mathcal H}
\newcommand{\lijm}{\ell_{j}^M}
\newcommand{\lijw}{\ell_{j}^W}
\newcommand{\eij}{e_{i_j}}
\newcommand{\eik}{e_{i_k}}
\newcommand{\rik}{R_{i_k}}
\newcommand{\rij}{R_{i_j}}
\newcommand{\Rij}{R_{i_j}}
\newcommand{\Rik}{R_{i_k}}
\newcommand{\fij}{F_{i_j}}
\newcommand{\fik}{F_{i_k}}
\newcommand{\Fij}{F_{i_j}}
\newcommand{\tljm}{{\ell}_{j}^M}
\newcommand{\ljm}{{\ell}_{j}^M}
\newcommand{\tljw}{{\ell}_{j}^W}
\newcommand{\ljw}{{\ell}_{j}^W}
\newcommand{\tjm}{t^M_j}
\newcommand{\tmj}{t^M_j}
\newcommand{\Tjm}{T^M_j}
\newcommand{\Tmj}{T^M_j}
\newcommand{\Tkm}{T^M_k}
\newcommand{\tjw}{t^W_j}
\newcommand{\twj}{t^W_j}
\newcommand{\Tjw}{T^W_j}
\newcommand{\Twj}{T^W_j}
\newcommand{\btjw}{\bar{T}^W_j}
\newcommand{\btjm}{\bar{T}^M_j}
\newcommand{\skipjm}{{SKIP}^M_j}
\newcommand{\skipmj}{{SKIP}^M_j}
\newcommand{\skipkm}{{SKIP}^M_k}
\newcommand{\skipjw}{{SKIP}^W_j}
\newcommand{\skipwj}{{SKIP}^W_j}
\newcommand{\tfij}{\tilde{F}_{i_j}}
\begin{document}

\makeatletter

\makeatother
\author{Ron Aharoni}
\address{Department of Mathematics\\ Technion}
\email[Ron Aharoni]{raharoni@gmail.com}
\thanks{\noindent The research of the first author was
supported by BSF grant no. $2006099$, by GIF grant no. I
$-879-124.6/2005$, by the Technion's research promotion fund, and by
the Discont Bank chair.}

\author{David Howard}
\address{Department of Mathematics\\ Colgate University}
\email[David Howard]{dmhoward@colgate.edu}
\thanks{\noindent The research of the second author was
supported by BSF grant no. $2006099$, and by ISF grants Nos.
$779/08$, $859/08$ and $938/06$.
 }

\title{A rainbow $r$-partite version of the Erd\H{o}s-Ko-Rado theorem}

\begin{abstract}
Let $f(n,r,k)$ be the minimal number such that every hypergraph larger than $f(n,r,k)$ contained in $\binom{[n]}{r}$
contains a matching of size $k$,  and let $g(n,r,k)$ be the minimal number such that every hypergraph larger than $g(n,r,k)$
contained in the $r$-partite $r$-graph $[n]^{r}$ contains a matching of size $k$. The Erd\H{o}s-Ko-Rado theorem states
that  $f(n,r,2)=\binom{n-1}{r-1}$~~($r \le \frac{n}{2}$) and it is easy to show that $g(n,r,k)=(k-1)n^{r-1}$.

The conjecture inspiring this paper is that if
$F_1,F_2,\ldots,F_k\subseteq \binom{[n]}{r}$ are of size larger than
$f(n,r,k)$ or $F_1,F_2,\ldots,F_k\subseteq [n]^{r}$ are of size
larger than $g(n,r,k)$ then there exists a rainbow matching, i.e. a
choice of disjoint edges $f_i \in F_i$.  In this paper we deal
mainly with the second part of the conjecture, and prove it for
$r\le 3$.  \vspace{.1cm}

We also prove that for every $r$ and $k$ there exists $n_0=n_0(r,k)$
such that the $r$-partite version of the conjecture is true for
$n>n_0$.

 \end{abstract}

\maketitle

\section{Motivation}
\subsection{The Erd\H{o}s-Ko-Rado theorem and rainbow matchings}
A {\em matching} is a collection of disjoint sets. As is customary,
we write $[n]$ for the generic set of size $n$, $\{1, 2, \ldots,
n\}$. We denote by $\binom{[n]}{r}$ the set of subsets (also called
``edges'') of size $r$ of $[n]$.
 The largest size of a matching in a hypergraph
$H$ is denoted by $\nu(H)$. The famous Erd\H{o}s-Ko-Rado (EKR)
theorem \cite{ekr} states that if $r\le \frac{n}{2}$ and a hypergraph $H
\subseteq \binom{[n]}{r}$ has more than $\binom{n-1}{r-1}$ edges,
then $\nu(H)
>1$.  This has been extended in more
than one way to pairs of hypergraphs. For example, in \cite{mt,
pyber} the following was proved:
\begin{theorem}\label{mt}
If $r \le \frac{n}{2}$, and $H_1,H_2\subseteq \binom{[n]}{r}$ satisfy
$|H_1||H_2|>\binom{n-1}{r-1}^2$ (in particular if
 $|H_i|>\binom{n-1}{r-1},~i=1,2$),
  then there
exist disjoint edges, $e_1 \in H_1,~e_2 \in H_2$.
\end{theorem}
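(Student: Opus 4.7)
The plan is to prove the contrapositive: if $H_1,H_2\subseteq\binom{[n]}{r}$ are cross-intersecting (i.e.\ $e_1\cap e_2\ne\emptyset$ for every $e_1\in H_1$ and $e_2\in H_2$), then $|H_1||H_2|\le\binom{n-1}{r-1}^2$. The tools are the shifting technique and induction on $n$.

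First, I would apply the standard shift operator $S_{ij}$ ($i<j$) simultaneously to $H_1$ and $H_2$. Two routine facts underlie this reduction: $|S_{ij}(H_k)|=|H_k|$, and the cross-intersecting property is preserved (a short four-case check based on whether $i$ and $j$ lie in a pair of edges). Iterating until no further shifts apply, I may assume that both $H_1$ and $H_2$ are left-shifted.

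Next, I induct on $n$. The base case $n=2r$ uses that cross-intersection forbids any pair of complementary $r$-sets, so a short counting argument together with AM--GM bounds $|H_1||H_2|$ by $\binom{2r-1}{r-1}^2$. For the inductive step, decompose $H_k=A_k\cup B_k$ with $A_k=\{e\in H_k:1\in e\}$ and $B_k=\{e\in H_k:1\notin e\}$, and set $\tilde A_k=\{e\setminus\{1\}:e\in A_k\}\subseteq\binom{[2,n]}{r-1}$. Shiftedness yields the key structural link: if $e\in B_k$ and $j\in e$, then $(e\setminus\{j\})\cup\{1\}\in H_k$, so $e\setminus\{j\}\in\tilde A_k$. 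Moreover $B_1,B_2$ are cross-intersecting on $[2,n]$, giving by induction $|B_1||B_2|\le\binom{n-2}{r-1}^2$; and the pairs $(\tilde A_1,B_2)$, $(\tilde A_2,B_1)$ are cross-intersecting across different uniformities, because $1\notin f\in B_{3-k}$ forces the intersection of $\{1\}\cup\tilde e$ with $f$ to lie in $\tilde e\cap f$.

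The main obstacle is the diagonal term $|\tilde A_1||\tilde A_2|$: the families $\tilde A_1,\tilde A_2$ need not be cross-intersecting, so the induction hypothesis does not apply to them directly. The resolution, which is the delicate heart of the proof, is to use the shadow-type containment above to trade mass between $\tilde A_k$ and $B_k$, combined with a mixed-uniformity cross-intersecting lemma that yields $|\tilde A_j||B_{3-j}|\le\binom{n-2}{r-2}\binom{n-2}{r-1}$ (this is where $r\le n/2$ is truly used). Once these bounds are in hand, expanding
\[
|H_1||H_2|=(|\tilde A_1|+|B_1|)(|\tilde A_2|+|B_2|)
\]
and invoking Pascal's identity $\binom{n-1}{r-1}=\binom{n-2}{r-2}+\binom{n-2}{r-1}$, the four contributions telescope into $\binom{n-1}{r-1}^2$, closing the induction.
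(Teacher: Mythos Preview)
The paper does not prove this theorem; it is quoted from the literature (Matsumoto--Tokushige and Pyber) as background, so there is no ``paper's proof'' to compare against. I will therefore assess your sketch on its own terms.

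Your reduction to shifted families and the link/deletion decomposition $H_k=A_k\cup B_k$ are standard and correct, as are the observations that $B_1,B_2$ remain cross-intersecting on $[2,n]$ and that $\tilde A_j,B_{3-j}$ are cross-intersecting across uniformities. The gap is exactly where you flag it: the term $|\tilde A_1|\,|\tilde A_2|$. Write $\alpha=\binom{n-2}{r-2}$ and $\beta=\binom{n-2}{r-1}$. Your three bounds give
\[
|\tilde A_1||B_2|+|\tilde A_2||B_1|+|B_1||B_2|\le 2\alpha\beta+\beta^2,
\]
so the ``telescoping'' you invoke would require $|\tilde A_1|\,|\tilde A_2|\le\alpha^2$. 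But nothing in your argument yields this: $\tilde A_1,\tilde A_2$ are not cross-intersecting, and the shadow containment $\partial B_k\subseteq\tilde A_k$ pushes the wrong way (it gives lower, not upper, bounds on $|\tilde A_k|$). The phrase ``trade mass between $\tilde A_k$ and $B_k$'' is not a proof step; as stated, the four-term expansion does not close. A concrete obstruction: if $B_1=B_2=\emptyset$ your three product bounds are vacuous, while $|\tilde A_k|$ can be as large as $\alpha+\beta$, so the needed inequality $a_1a_2\le\alpha^2$ is simply false at that corner, and one must instead exploit the interdependence of the bounds rather than sum them.

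For orientation, the published proofs avoid this decomposition entirely. Pyber uses Katona's cyclic-permutation method: on a cyclic order of $[n]$, the arcs of length $r$ lying in $H_1$ and those lying in $H_2$ together occupy at most $r$ positions, and averaging over all cycles gives the product bound directly. Matsumoto--Tokushige (following Hilton) use the Kruskal--Katona theorem: after shifting, the families are initial segments in colex, and cross-intersection forces a complementary relation between them from which the product bound follows. If you want to salvage an inductive argument, you will need an additional structural input of comparable strength (essentially a Kruskal--Katona--type inequality tying $|\tilde A_k|$ to $|B_{3-k}|$), not just the termwise bounds you list.
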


In \cite{mt} this was extended to hypergraphs of different uniformities.

It is natural to try to extend the EKR theorem to more than two hypergraphs. The
relevant notion is that of ``rainbow matchings".

\begin{definition}
 Let $\cf=(F_i \mid~~1 \le i \le k)$ be a collection of hypergraphs. A
 choice of disjoint edges, one from each $F_i$, is called a {\em
 rainbow matching} for $\cf$.
 \end{definition}

\begin{notation}
For $n,r,k$ satisfying $r \le \frac{n}{2}$ we denote by $f(n,r,k)$
the smallest number such that $\nu(H)\ge k$ for every $H \subseteq
\binom{n}{r}$ larger than $f(n,r,k)$. \end{notation}

The value of $f(n,r,k)$ is known for large enough $n$:

\begin{theorem} \cite{erdosplus} For every $r,k$ there exists $n_0=n_0(r,k)$ such that for every $n \ge n_0$:

$$f(n,r,k)=\binom{n}{r}-\binom{n-k+1}{r}$$
\end{theorem}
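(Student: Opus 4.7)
The plan is to prove the two inequalities separately. For the lower bound $f(n,r,k)\ge\binom{n}{r}-\binom{n-k+1}{r}$ I would exhibit the ``star on $k-1$ vertices'' $H^{\ast}=\{e\in\binom{[n]}{r}:e\cap[k-1]\neq\emptyset\}$. It has exactly $\binom{n}{r}-\binom{n-k+1}{r}$ edges, and $\nu(H^{\ast})=k-1$ because any matching uses at most one edge per element of $[k-1]$.

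For the upper bound I would induct on $k$, the base $k=1$ being trivial, and prove: if $\nu(H)\le k-1$ then $|H|\le\binom{n}{r}-\binom{n-k+1}{r}$. If in fact $\nu(H)\le k-2$, the induction hypothesis yields $|H|\le\binom{n}{r}-\binom{n-k+2}{r}<\binom{n}{r}-\binom{n-k+1}{r}$, so I may assume $\nu(H)=k-1$. Fix a maximum matching $M=\{e_1,\dots,e_{k-1}\}$ with vertex set $V=V(M)$, $|V|=r(k-1)$; every edge of $H$ meets $V$. Pick a threshold $D=C_{r,k}\,n^{r-2}$ and call $v\in V$ \emph{heavy} if its $H$-degree is at least $D$, otherwise \emph{light}. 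Let $S\subseteq V$ be the set of heavy vertices.

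The engine of the argument is a greedy extension lemma: given distinct heavy vertices $v_1,\dots,v_j$ and already-chosen pairwise disjoint edges touching $O(rk)$ vertices, an edge through the next $v_{j+1}$ disjoint from all prior ones exists, because the number of ``bad'' edges through $v_{j+1}$ is at most $rk\cdot\binom{n-2}{r-2}=O(n^{r-2})<D$. This yields three subcases. If $|S|\ge k$, greedy produces $k$ disjoint edges, contradicting $\nu(H)=k-1$. If $|S|=k-1$ and some $e\in H$ avoids $S$, then extending $\{e\}$ by greedy choices through the $k-1$ heavy vertices gives a matching of size $k$, again a contradiction, so every edge meets $S$ and hence $|H|\le\binom{n}{r}-\binom{n-k+1}{r}$. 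If $|S|\le k-2$, every edge either meets $S$ (contributing at most $\binom{n}{r}-\binom{n-k+2}{r}$ edges) or hits a light vertex in $V\setminus S$ (contributing at most $|V|D=O(n^{r-2})$). Since $\binom{n}{r}-\binom{n-k+1}{r}-\bigl(\binom{n}{r}-\binom{n-k+2}{r}\bigr)=\binom{n-k+1}{r-1}=\Theta(n^{r-1})$, summing still comes out strictly below $\binom{n}{r}-\binom{n-k+1}{r}$ for $n$ large.

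The main obstacle is calibrating $D$: it must be large enough that greedy extension succeeds (which forces $D\gg rk\binom{n-2}{r-2}$) and small enough that the total contribution from light vertices, $r(k-1)D$, is dwarfed by the slack $\binom{n-k+1}{r-1}$ in the third subcase. Both demands are of the form $n^{r-2}\ll n^{r-1}$, so they can be simultaneously satisfied once $n\ge n_0(r,k)$, which explains why the theorem is stated only for large $n$. The induction then closes, giving the matching upper bound $f(n,r,k)\le\binom{n}{r}-\binom{n-k+1}{r}$.
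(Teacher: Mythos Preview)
The paper does not prove this theorem; it is simply quoted as a known result from the literature (the citation is to Erd\H{o}s--Gallai, though for general $r$ the result is usually attributed to Erd\H{o}s's 1965 paper on independent $r$-tuples). There is therefore no ``paper's own proof'' to compare your proposal against.

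Your argument is the standard one and is essentially correct. One point deserves to be made explicit: in the greedy extension through heavy vertices $v_1,\dots,v_k$, when you choose the edge $e_i$ through $v_i$ you must forbid not only the vertices of $e_1,\dots,e_{i-1}$ but also the later heavy vertices $v_{i+1},\dots,v_k$, so that $v_{i+1}$ is actually available at the next step. This still forbids only $O(rk)$ vertices and hence at most $O(rk)\binom{n-2}{r-2}$ edges through $v_i$, so the greedy survives; your $O(rk)$ phrasing suggests you have this in mind, but as written the sentence only mentions ``disjoint from all prior ones''. You should also remark that $n_0(r,k)$ is chosen at least as large as $n_0(r,k-1)$, so that the inductive appeal in the case $\nu(H)\le k-2$ is legitimate. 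With these two clarifications the three-case analysis goes through cleanly.
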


The following  was proved in \cite{frankl}:

\begin{theorem}
If $n \ge kr$ then $f(n,r,k) \le (k-1)\binom{n-1}{r-1}$.
\end{theorem}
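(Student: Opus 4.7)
I would establish the contrapositive: if $H \subseteq \binom{[n]}{r}$ satisfies $\nu(H) \le k-1$ and $n \ge kr$, then $|H| \le (k-1)\binom{n-1}{r-1}$. The argument is a double induction on the pair $(r,n)$, lexicographic with $r$ primary, combined with the shifting technique.

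The base case $r=1$ is immediate, since then $\nu(H)=|H|$ forces $|H|\le k-1=(k-1)\binom{n-1}{0}$. For each $r\ge 2$ the secondary induction starts at $n=kr$, which I would handle by a counting argument on perfect matchings: the assumption $|H|>(k-1)\binom{kr-1}{r-1}=\frac{k-1}{k}\binom{kr}{r}$ gives $|\binom{[kr]}{r}\setminus H|<\binom{kr}{r}/k$; a uniformly chosen perfect matching of $[kr]$ (of which there are $N=(kr)!/((r!)^k k!)$) contains any fixed edge with probability $k/\binom{kr}{r}$, so the expected number of edges of $\binom{[kr]}{r}\setminus H$ in a random perfect matching is strictly less than $1$, and therefore some perfect matching of $[kr]$ lies entirely inside $H$, yielding $\nu(H)\ge k$.

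For the inductive step, assume $n\ge kr+1$ and $r\ge 2$. Since the shifting operation preserves $|H|$ and does not increase $\nu$, I may assume $H$ is shifted. Split $H=H_{\bar n}\sqcup H_n$ according to whether $n$ belongs to the edge, and put $H_n^{*}=\{e\setminus\{n\}:e\in H_n\}\subseteq\binom{[n-1]}{r-1}$. The secondary induction applied to $H_{\bar n}\subseteq\binom{[n-1]}{r}$ yields $|H_{\bar n}|\le(k-1)\binom{n-2}{r-1}$. The key claim is $\nu(H_n^{*})\le k-1$: if $g_1,\dots,g_k$ were pairwise disjoint in $H_n^{*}$, the bound $n\ge kr+1$ gives $|[n-1]\setminus\bigcup_i g_i|\ge k$, allowing the choice of distinct $x_1,\dots,x_k$ outside $\bigcup_i g_i$; since each $g_i\cup\{n\}\in H$ and $H$ is shifted, the replacement $g_i\cup\{x_i\}$ also lies in $H$, producing a matching of size $k$ and contradicting $\nu(H)\le k-1$. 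The primary induction applied to $H_n^{*}$ (legal because $n-1\ge k(r-1)$) then yields $|H_n^{*}|\le(k-1)\binom{n-2}{r-2}$, and Pascal's identity $\binom{n-1}{r-1}=\binom{n-2}{r-1}+\binom{n-2}{r-2}$ combines these into $|H|\le(k-1)\binom{n-1}{r-1}$.

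The main obstacle I expect is the base case $n=kr$: the swap argument used in the inductive step genuinely requires the slack $n\ge kr+1$, so the equality case must be handled by a separate enumerative argument, and this is exactly where the hypothesis $n\ge kr$ is tight.
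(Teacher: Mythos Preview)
Your argument is correct. The double induction with shifting is precisely the standard proof of this inequality due to Frankl, and each step checks out: the averaging over perfect matchings of $[kr]$ handles $n=kr$ cleanly (the probability computation $k/\binom{kr}{r}$ is right), and in the inductive step the swap $g_i\cup\{n\}\mapsto g_i\cup\{x_i\}$ is justified by shiftedness exactly when $n\ge kr+1$, as you observe.

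One remark on the comparison you were asked to make: the paper does \emph{not} supply its own proof of this theorem. It is stated only as a citation to Frankl's survey on the shifting technique, with no argument given. So there is nothing in the paper to compare against; your write-up is in fact a faithful reconstruction of the cited proof, and the paper uses the result as a black box in motivating the rainbow conjectures that follow.
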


A rainbow version of this theorem was proved in \cite{hls}:

\begin{theorem}\cite{hls}
If $F_1, \ldots, F_k$ are hypergraphs, where $F_i$ is $r_i$-uniform
and $n \ge \sum_{i \le k}r_i$ and $|F_i|>(k-1)\binom{n-1}{r_i-1}$
then the family $(F_1, \ldots, F_k)$ has a rainbow matching.
\end{theorem}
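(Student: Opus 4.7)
\emph{Proof plan.} The natural approach is induction on $k$. The base case $k=1$ is immediate, so assume the statement for $k-1$ and consider $F_1,\ldots,F_k$ satisfying the hypotheses. The plan is to exhibit an edge $f_k\in F_k$ whose removal reduces the problem to the inductive hypothesis: setting $F_i':=\{e\in F_i:e\cap f_k=\emptyset\}\subseteq\binom{[n]\setminus f_k}{r_i}$ and $n':=n-r_k$, we aim for $|F_i'|>(k-2)\binom{n'-1}{r_i-1}$ for every $i<k$. The ground-set condition $n'\ge\sum_{i<k}r_i$ follows from $n\ge\sum_{i\le k}r_i$, so the inductive hypothesis then produces a rainbow matching $f_1,\ldots,f_{k-1}$ of $(F_1',\ldots,F_{k-1}')$ disjoint from $f_k$.

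To locate $f_k$, the first attempt is a first-moment argument: choose $f_k\in F_k$ uniformly at random and bound, for each $i<k$, the expected loss $|F_i\setminus F_i'|$. Writing $d_j(v)$ for the degree of $v$ in $F_j$, and using $d_k(v)\le\binom{n-1}{r_k-1}$, $\sum_v d_i(v)=r_i|F_i|$, together with the hypothesis $|F_k|>(k-1)\binom{n-1}{r_k-1}$, one gets
\[
\mathbb{E}\bigl[|F_i\setminus F_i'|\bigr]\;\le\;\frac{1}{|F_k|}\sum_{v}d_i(v)\,d_k(v)\;\le\;\frac{r_i\,|F_i|\,\binom{n-1}{r_k-1}}{|F_k|}\;<\;\frac{r_i\,|F_i|}{k-1}.
\]
A weighted Markov inequality (with weights tuned to the $r_i$) then furnishes some $f_k\in F_k$ for which every $i<k$ simultaneously satisfies $|F_i'|\ge \tfrac{k-1-r_i}{k-1}|F_i|\ge(k-1-r_i)\binom{n-1}{r_i-1}$.

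The principal obstacle is that this estimate is not sharp enough once $r_i\ge 2$: the target right-hand side is $(k-2)\binom{n'-1}{r_i-1}$, whereas the coefficient $k-1-r_i$ can be strictly smaller than $k-2$, and the binomial shrinkage from $\binom{n-1}{r_i-1}$ to $\binom{n'-1}{r_i-1}$ goes in the unfavorable direction. To close this gap I would refine the sampling in one of two ways. First, instead of drawing uniformly from $F_k$, one may restrict the sample to a sub-collection of $F_k$ whose edges avoid vertices of abnormally large $d_i$-degree; the hypothesis $|F_k|>(k-1)\binom{n-1}{r_k-1}$ guarantees, via a pigeonhole over the few high-$d_i$-degree vertices, enough such edges to drive a tighter expectation bound. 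Second, one may re-order the families so as to peel off at each step an edge of minimum uniformity, minimizing the ground-set shrinkage and hence the binomial loss. Either refinement requires careful bookkeeping of the interaction between the various $r_i$'s and the residual binomial coefficients, and this is where I expect the bulk of the technical effort to lie.
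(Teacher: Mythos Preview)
The paper does not prove this theorem at all: it is quoted verbatim from \cite{hls} (Huang--Loh--Sudakov) as background, with no argument supplied. So there is no ``paper's own proof'' to compare against here; the relevant benchmark is the original Huang--Loh--Sudakov argument.

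As a standalone attempt, your proposal is not a proof but a plan with an explicitly acknowledged gap, and the gap is genuine. Your first-moment bound
\[
\mathbb{E}\bigl[|F_i\setminus F_i'|\bigr]<\frac{r_i\,|F_i|}{k-1}
\]
is correct, but the resulting lower bound $|F_i'|>(k-1-r_i)\binom{n-1}{r_i-1}$ falls short of the target $(k-2)\binom{n'-1}{r_i-1}$ already for $r_i=2$ whenever $n$ is large compared to $k$ and $r_k$: one computes $(k-3)(n-1)-(k-2)(n-r_k-1)=(k-2)r_k-(n-1)$, which is negative once $n>(k-2)r_k+1$. The loss of a full factor $r_i$ in the coefficient is intrinsic to bounding $\sum_v d_i(v)d_k(v)$ by $\max_v d_k(v)\cdot\sum_v d_i(v)$; neither of your two suggested refinements addresses this. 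Restricting $f_k$ to avoid high-$d_i$-degree vertices does not help, because the bad case for your inequality is when the degrees $d_k(v)$ are \emph{uniformly} close to $\binom{n-1}{r_k-1}$, not when a few are large. Peeling in order of uniformity changes the binomial in the target but does nothing to repair the $(k-1-r_i)$ coefficient. Moreover, even for a single $i$ you only get the expectation bound, and your appeal to a ``weighted Markov inequality'' to handle all $i<k$ simultaneously is not justified: Markov gives you one inequality at a time, and a union bound over $k-1$ events would cost another factor you cannot afford.

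For orientation: the actual proof in \cite{hls} does not proceed by this kind of edge-deletion induction. It uses shifting (exactly the tool developed in Section~\ref{shifting} of the present paper) to reduce to shifted families, and then argues directly that a specific canonical matching on an initial segment of $[n]$ must be rainbow, via a counting estimate on shifted families that avoid a fixed low edge. If you want to reconstruct a proof, that is the route to try; the averaging approach, as you have set it up, does not appear salvageable without an essentially new idea.
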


It is a natural guess that Theorem \ref{mt} can be extended to
general $k$, as follows:

\begin{conjecture}\label{rainbowgeneral}
Let $\cf=(F_1, \ldots, F_k)$ be a system of hypergraphs contained in
$\binom{[n]}{r}$. If $|F_i|>f(n,r,k)$  for all $i \le k$ then $\cf$ has a
rainbow matching. \end{conjecture}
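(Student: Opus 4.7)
The plan is to prove Conjecture \ref{rainbowgeneral} by induction on $k$, with the base case $k=2$ supplied by Theorem \ref{mt}: when $|F_1|, |F_2| > f(n,r,2) = \binom{n-1}{r-1}$, the product bound $|F_1||F_2| > \binom{n-1}{r-1}^2$ produces disjoint $e_1 \in F_1$ and $e_2 \in F_2$. Note also that for $k=3$ one can try to bootstrap from Theorem \ref{mt} applied to pairs, combined with a defect/swap argument.

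For the inductive step, assume the conjecture for $k-1$ and consider $\cf = (F_1, \ldots, F_k)$ with each $|F_i| > f(n,r,k) \ge f(n,r,k-1)$. Suppose for contradiction there is no rainbow matching. By induction, for each $s \le k$ the subfamily $(F_i)_{i \ne s}$ admits a rainbow matching $M_s$ of size $k-1$, and since $M_s$ cannot be extended by any edge of $F_s$, every edge of $F_s$ meets the $(k-1)r$-vertex set $V(M_s)$. In particular each $F_s$ is covered by a set of only $(k-1)r$ vertices, which is a very restrictive structural condition when $n$ is large.

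The next step is to invoke the Erd\H{o}s extremal result cited above in a stability form: for $n \ge n_0(r,k)$, a family $F \subseteq \binom{[n]}{r}$ with $|F| > f(n,r,k)$ and $\nu(F) < k$ must be essentially concentrated on a $(k-1)$-element ``kernel'' $S$ (almost all edges of $F$ pass through $S$). The cover condition above then forces the kernel $S_s$ of $F_s$ to sit inside $V(M_s)$ for each $s$. One now runs a rotation argument: since $|F_j| > f(n,r,k-1)$ for every $j \ne s$, the subfamily $(F_i)_{i \ne s}$ has many alternative rainbow $(k-1)$-matchings, and by swapping the edges of $M_s$ one at a time one should be able to vacate all vertices of $S_s$ from $V(M_s)$, after which the edges of $F_s$ (pinned to $S_s$) become disjoint from the updated matching, a contradiction.

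The main obstacle is simultaneous control of all $k$ kernels at once: the rotations that free a vertex of $S_s$ may shift $V(M_s)$ so as to violate the analogous kernel condition for another index $s'$. Overcoming this will likely require a global, Hall-type or topological fractional argument in the spirit of Aharoni--Berger that assembles a single rainbow matching respecting all $k$ kernels simultaneously, rather than rotating one color at a time. A second, technically distinct obstacle is the small-$n$ regime, where the stability version of the Erd\H{o}s theorem is not available; there one would need a shifting argument applied coordinate-wise to the whole system $\cf$ while preserving the rainbow structure, and checking that such a simultaneous shift preserves the existence (or non-existence) of a rainbow matching is classically the subtle step.
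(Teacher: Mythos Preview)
The statement you are trying to prove is stated in the paper as a \emph{conjecture}, not a theorem; the paper does not claim a proof of it in general. What the paper actually proves is only the case $r=2$ (Theorem~\ref{af}, via Meshulam's shifting argument), and it cites \cite{hy} for the case $r=3$ when $n\ge 4k-1$. So before anything else, you should recognize that a full proof of Conjecture~\ref{rainbowgeneral} is not expected here, and any attempt to prove it outright is going beyond the paper.

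As for the content of your sketch: it is not a proof but an outline with two self-identified obstacles, and both are genuine. The induction scheme breaks down exactly where you say it does. From the cover condition ``every edge of $F_s$ meets $V(M_s)$'' you cannot deduce that $F_s$ has a $(k-1)$-element kernel; a family covered by $(k-1)r$ vertices can easily have size exceeding $f(n,r,k)$ without being concentrated on $k-1$ points (e.g.\ spread its edges over all $(k-1)r$ cover vertices). The stability input you invoke would need $\nu(F_s)<k$, but nothing in your setup gives that: each $F_s$ individually has $\nu(F_s)\ge k$ by the definition of $f(n,r,k)$. So the ``kernel'' step has no footing. The rotation/vacating argument is then built on this missing foundation, and your own remark that simultaneous control of all kernels ``will likely require a global, Hall-type or topological fractional argument'' is an admission that the key step is absent. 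Finally, the small-$n$ regime is not a technicality here: $f(n,r,k)$ is not even known in closed form for all $n$, so a shifting argument would have to work against an implicitly defined threshold.

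In short: the paper leaves this as an open conjecture, and your proposal does not close the gap; the inductive/stability approach as written has a false intermediate claim (existence of a small kernel for each $F_s$) and an unspecified global step.
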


In \cite{hy} the case $r=3$ of Conjecture \ref{rainbowgeneral} is
solved for $n \ge 4k-1$. In Section \ref{shifting} we shall present
a proof by Meshulam for the $r=2$ case of this conjecture.

\subsection{The $r$-partite case}

An $r$-uniform hypergraph $H$ is called {\em $r$-partite} if $V(H)$
is partitioned into sets $V_1,\ldots, V_r$, called the {\em sides}
of $H$, and each edge meets every $V_i$ in precisely one vertex. If
all sides are of the same size $n$, $H$ is called {\em
$n$-balanced}. The complete $n$-balanced $r$-partite hypergraph can be identified with $[n]^r$.

\begin{remark} Let $H$ be an $r$-partite hypergraph, and let $V_i$ be one of its sides. There exists a matching in $H$ covering
$V_i$ if and only if  there exists a  rainbow matching of the family
 $H_v$, $v \in V_i$, where $H_v$ is the hypergraph
consisting of the $(r-1)$-edges incident with  $v$. \end{remark}


Conditions of different types are known for the existence of rainbow
matchings. For example, in \cite{ah, haxell} a sufficient condition
was formulated in terms of  domination in the line graph of
$\bigcup_{i \in I}F_i$ and in terms of $\nu(\bigcup_{i \in
K\subseteq I}F_i)$~($I$ ranging over all subsets of $[k]$).

Here we shall be interested in
 conditions formulated in terms of the sizes of the hypergraphs.

\begin{observation}\label{sizecond}
If $F$  is a set of edges in an  $n$-balanced $r$-partite hypergraph
and $|F|>(k-1)n^{r-1}$ then $\nu(F) \ge k$.
\end{observation}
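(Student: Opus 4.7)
My plan is to exhibit the $n^r$ edges of $[n]^r$ as a disjoint union of $n^{r-1}$ matchings of size $n$; the observation then follows from a single application of pigeonhole.

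To construct the partition I would identify each side $V_i$ with $\mathbb{Z}/n\mathbb{Z}$, and for each vector $\alpha = (\alpha_2, \ldots, \alpha_r) \in (\mathbb{Z}/n\mathbb{Z})^{r-1}$ define
\[
  M_\alpha = \{(i,\, i+\alpha_2,\, \ldots,\, i+\alpha_r) : i \in \mathbb{Z}/n\mathbb{Z}\},
\]
with all addition taken modulo $n$. Three routine checks then need to be made: (i) any two edges of a single $M_\alpha$ have distinct first coordinates and therefore, by the shift, differ in every coordinate, so $M_\alpha$ is a matching of size $n$; (ii) an arbitrary edge $e = (e_1, \ldots, e_r) \in [n]^r$ lies in $M_\alpha$ if and only if $\alpha_j = e_j - e_1$ for every $j \ge 2$, so $e$ belongs to exactly one $M_\alpha$. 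Together these imply that the $n^{r-1}$ sets $M_\alpha$ form a partition of $[n]^r$ into matchings of size $n$.

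The conclusion is then a one-line pigeonhole: if $|F| > (k-1)n^{r-1}$ then some $M_\alpha$ satisfies $|F \cap M_\alpha| > k-1$, i.e.\ $|F \cap M_\alpha| \ge k$; since these $k$ edges all sit inside a common matching they are pairwise disjoint, giving $\nu(F) \ge k$. The only genuinely creative step is spotting the shift-by-$\alpha$ decomposition into parallel classes; once it has been written down, nothing else in the argument is more than routine bookkeeping, so I do not anticipate any real obstacle.
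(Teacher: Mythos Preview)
Your proposal is correct and follows essentially the same approach as the paper: decompose $[n]^r$ into $n^{r-1}$ perfect matchings of size $n$ and apply pigeonhole. The paper merely asserts that such a decomposition exists, whereas you supply the explicit shift-by-$\alpha$ construction; otherwise the arguments are identical.
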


\begin{proof}
The complete $n$-balanced $r$-partite hypergraph $[n]^r$ can be
decomposed into $n^{r-1}$ perfect matchings $M_i$, each of size $n$.
Writing $F =\bigcup_{i \le n^{r-1}} (F \cap M_i)$ shows that at least one of
the matchings $F \cap M_i$ has size $k$ or more.
\end{proof}

The $r$-partite analogue of Conjecture \ref{rainbowgeneral} is:

\begin{conjecture}\label{sizecondition}
If $\cf=(F_1,F_2, \ldots,F_k)$ is a set of sets of edges in an
$n$-balanced $r$-partite hypergraph  and $|F_i| > (k-1)n^{r-1}$ for
all $ i \le k$ then $\cf$ has a rainbow matching.
\end{conjecture}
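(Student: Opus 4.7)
The plan is to attack Conjecture~\ref{sizecondition} from two directions: directly for small $r$, and asymptotically (for $n>n_0(r,k)$) via a stability argument.

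A first attempt is induction on $k$. Assuming the conclusion for $k-1$, extract a partial rainbow matching $M=\{e_1,\ldots,e_{k-1}\}$ with $e_i\in F_i$ and try to find an edge of $F_k$ disjoint from $V(M)$. The trouble is that the number of edges of $[n]^r$ meeting $V(M)$ is $n^r-(n-k+1)^r \sim r(k-1)n^{r-1}$, which exceeds the hypothesis $|F_k|>(k-1)n^{r-1}$ whenever $r\ge 2$. So blind greedy extension fails, and one must choose the partial matching carefully, for instance by minimizing the number of edges of $F_k$ which it blocks; if this minimum is still too large, one then looks for a local swap $e_i\leftrightarrow e_i'$ with $e_i'\in F_i$ that maintains rainbowness and frees more of $F_k$.

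For small $r$ the plan is to combine shifting with case analysis. After left-compressing each $F_i$ one may assume its extremal edges are supported on small-indexed vertices, so the only obstructions to a rainbow matching come from near-extremal configurations (essentially unions of stars centered on indices $1,\ldots,k-1$); one then enumerates these obstructions, which is short for $r=2$ and considerably more intricate for $r=3$. For the asymptotic result, the plan is to establish a stability lemma: any $F\subseteq[n]^r$ with $|F|>(k-1)n^{r-1}$ and $\nu(F)<k$ must be close to a union of $k-1$ stars. Applied to each $F_i$, this reduces the problem to finding a rainbow matching in a family of near-stars, which can be done greedily: the star-like structure supplies many disjoint choices in each $F_i$, and a Hall-type argument on the centers completes the selection.

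The main obstacle throughout is that the threshold $(k-1)n^{r-1}$ is tight --- attained by taking $F_1=\cdots=F_k$ equal to a union of $k-1$ fixed stars --- so every argument must exploit the strict inequality $|F_i|>(k-1)n^{r-1}$ rather than merely $|F_i|\ge(k-1)n^{r-1}$. The rigidity of the extremal configuration is simultaneously the difficulty and the leverage point: any perturbation away from a union of stars introduces new matchings that can be pieced together across the $F_i$, but making this quantitative when several $F_i$ simultaneously hover just above the threshold is the crux of the problem, and is presumably what forces the $r\le 3$ or large-$n$ restrictions in the paper's actual results.
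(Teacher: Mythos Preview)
Your proposal is a reasonable outline of strategies, but it diverges substantially from what the paper actually does, and in the $r=3$ case it underestimates the difficulty.

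For $r=3$, the paper does \emph{not} proceed by enumerating near-extremal obstructions after shifting. Instead, the core of the argument is an independent result (Theorem~\ref{maintheorem}): if bipartite graphs $F_1,\ldots,F_k\subseteq E(K_{n,n})$ satisfy $\sum_{i\in I}|F_i|>n|I|(|I|-1)$ for \emph{every} $I\subseteq[k]$, then they have a rainbow matching. The proof of this is algorithmic and occupies most of the paper: one shifts, then greedily chooses longest available edges, and shows that failure forces a violation of the subset-sum condition through an elaborate bookkeeping of ``gains'' (from procrastination and double counting) versus ``losses'' associated to short edges. The reduction of the $3$-partite case to Theorem~\ref{maintheorem} is then a short degree argument: order one side, assign $F_j$ to vertex $v_j$ so as to maximize degrees, and show the resulting link bipartite graphs satisfy the subset-sum hypothesis. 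Your ``enumerate the obstructions'' plan gives no hint of this Hall-type condition on \emph{all} subsets $I$, which is the real engine; a direct case analysis of shifted extremal configurations does not obviously yield the result.

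For large $n$, the paper's argument is considerably simpler than your proposed stability route. After shifting, one takes $A$ to be the first $k-1$ vertices in each side; since edges meeting $A$ twice number only $O(n^{r-2})$, each $F_i$ has edges meeting $A$ in exactly one point, and one can pick $e_1,\ldots,e_{k-1}$ with distinct single intersections $x_i\in A$. Then $|F_k|>(k-1)n^{r-1}$ guarantees an edge $e_k$ missing $\{x_1,\ldots,x_{k-1}\}$, and shifting lets one push each $e_i$ entirely into $A$ while avoiding $e_k$ and the earlier $e_j'$. No stability lemma is needed; the shifting already forces the star-like behaviour you were hoping to extract.

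So the genuine gap is in the $r=3$ plan: ``shifting plus case analysis'' is not what works here, and the missing idea is the Hall-type subset-sum condition of Theorem~\ref{maintheorem} together with its algorithmic proof.
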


In \cite{ahoward} this is proved for $k=2$.

The following result, stating the case $r=2$, will be subsumed by
later results, but this case has a short proof of its own:

\begin{theorem}\label{mainbipartite}
If $\cf=(F_1,F_2, \ldots,F_k)$ is a set of sets of edges in an
$n$-balanced bipartite graph  and $|F_i| > (k-1)n$ for all $ i \le
k$ then $\cf$ has a rainbow matching.
\end{theorem}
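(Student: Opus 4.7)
The plan is to prove this by induction on $k$ using a swap argument. The base case $k=1$ is immediate, since $|F_1|>0$ guarantees an edge. For the inductive step, I apply the inductive hypothesis to the subfamily $(F_2,\ldots,F_k)$ --- each satisfies $|F_i|>(k-1)n>(k-2)n$ --- to obtain a rainbow matching $M=(e_2,\ldots,e_k)$ of size $k-1$ with $e_i=(u_i,v_i)\in F_i$. If some edge of $F_1$ is disjoint from $V(M)$, we have the desired rainbow matching of size $k$ and are done.

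Otherwise, every $F_1$-edge meets the $2(k-1)$-vertex set $V(M)=\{u_2,v_2,\ldots,u_k,v_k\}$. Since $|F_1|>(k-1)n$, pigeonhole yields a vertex $w\in V(M)$ with $d_{F_1}(w)>n/2$; after relabeling take $w=u_2$ and set $N=N_{F_1}(u_2)\subseteq R$, so $|N|>n/2$. The key swap is this: for each $y\in N\setminus\{v_3,\ldots,v_k\}$, replacing $e_2$ by $(u_2,y)\in F_1$ produces a rainbow matching $M_y=((u_2,y),e_3,\ldots,e_k)$ of size $k-1$ that uses color $1$ in place of color $2$. If any such $M_y$ can be extended by an $F_2$-edge disjoint from $V(M_y)$, we are done. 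Otherwise every $F_2$-edge meets $V(M_y)=(V(M)\setminus\{v_2\})\cup\{y\}$ for every admissible $y$. Since $|N\setminus\{v_3,\ldots,v_k\}|>n/2-(k-2)$, this set has at least two elements once $n$ is sufficiently large, and intersecting the constraints over two such $y$'s forces every $F_2$-edge to meet the $(2k-3)$-vertex set $V(M)\setminus\{v_2\}$.

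Iterating the pigeonhole-plus-swap on $F_2$ (now known to have a small vertex cover) produces a new high-degree vertex inside that cover, enabling a further swap that shrinks the covering set. Continuing, one drives the cover of some $F_j$ below $k-1$ vertices, which together with K\"onig's theorem contradicts $|F_j|>(k-1)n$. The main obstacle I anticipate is making the iteration terminate cleanly: each swap changes which color is "missing", so one must track that the covers genuinely shrink and do not cycle through equivalent matchings. A secondary difficulty is the small-$n$ regime $k\le n<2k-2$, where $|N\setminus\{v_3,\ldots,v_k\}|$ may not reach $2$ and the intersection step fails; in that range a direct density count exploiting $|F_i|>(k-1)n$ against the narrow room $n<2k$ (which already forces $|F_i|$ to occupy a constant fraction of $K_{n,n}$) should close the remaining cases.
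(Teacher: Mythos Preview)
Your proposal has genuine gaps that you yourself flag. The swap-and-intersect step is correct as far as it goes: if no $M_y$ extends by an $F_2$-edge, then intersecting the constraints over two admissible $y\neq y'$ does force every $F_2$-edge to meet $V(M)\setminus\{v_2\}$. But the ``iterate'' paragraph is not a proof. After the first round you know $F_2$ has a cover of size $2k-3$; to reach a contradiction via K\"onig you must drive some cover down to $k-2$, i.e.\ shed $k-1$ more vertices. You have not specified which matching you swap in at the next stage (you are now holding $M_y$, which omits color~$2$, so a swap at a high-degree $F_2$-vertex $u_j$ replaces color~$j$, not color~$2$), nor why the covers of the successive colors form a strictly decreasing chain rather than oscillating. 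Without a well-ordered invariant this can cycle. The small-$n$ regime $k\le n\le 2k-2$ is also left open, and your suggested ``direct density count'' is not an argument.

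The paper's proof avoids all of this with a one-paragraph greedy construction that works for every $n$. Pick $v_1\in M$ with $d_{F_1}(v_1)\ge k$ (average degree exceeds $k-1$). Delete $v_1$ from $F_2$; since $d_{F_2}(v_1)\le n$, the remainder has size $>(k-2)n$, so some $v_2\neq v_1$ has $d_{F_2}(v_2)\ge k-1$. Continuing, obtain distinct $v_1,\ldots,v_k\in M$ with $d_{F_i}(v_i)>k-i$. Now choose edges backwards: pick $e_k\in F_k$ at $v_k$, then $e_{k-1}\in F_{k-1}$ at $v_{k-1}$ avoiding the single $W$-endpoint of $e_k$, and so on; at step $i$ you must avoid $k-i$ vertices in $W$, which $d_{F_i}(v_i)>k-i$ permits. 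The key idea you are missing is to anchor all $k$ colors at distinct vertices on one side \emph{before} choosing any edges, rather than trying to augment a partial rainbow matching.
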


\begin{proof}

Denote the sides of the bipartite graph $M$ and $W$. Since $\sum_{v
\in M} d_{F_1}(v)=|F_1|>(k-1)n$, there exists
 a vertex $v_1 \in M$ such that $d_{F_1}(v_1)\ge k$. Let
 $F'_2=F_2-v_1$. Since $d_{F_2}(v_1)\le n$, we have $|F'_2| >
 (k-2)n$, and hence there exists a vertex $v_2 \neq v_1$ such that $d_{F_2}(v_2)\ge
 k-1$. Continuing this way we obtain a sequence $v_1, \ldots, v_k$
 of distinct vertices in $M$, satisfying $d_{F_i}(v_i)> k-i$.
 Since $d_{F_k}(v_k)>0$ there exists an edge $e_k \in F_k$ containing
 $v_k$.
Since $d_{F_{k-1}}(v_{k-1})>1$ there exists an edge $e_{k-1} \in
F_{k-1}$ containing
 $v_{k-1}$ and missing $e_k$.
 Since $d_{F_{k-2}}(v_{k-2})>2$ there exists an edge $e_{k-2} \in
F_{k-2}$ containing
 $v_{k-2}$ and missing $e_k$ and $e_{k-1}$. Continuing this way, we
 construct a rainbow matching $e_1, \ldots,e_k$ for $\cf$.
 \end{proof}

We shall prove:

\begin{theorem}\label{generalr3}
Conjecture \ref{sizecondition} is true for $r=3$.
\end{theorem}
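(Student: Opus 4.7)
The plan is to prove Theorem \ref{generalr3} by induction on $k$, with the $k=2$ case (proved in \cite{ahoward}) as the base. In the inductive step, given $F_1,\ldots,F_k\subseteq A\times B\times C$ with $|A|=|B|=|C|=n$ and $|F_i|>(k-1)n^2$, the aim is to locate an edge $e_1=(a_1,b_1,c_1)\in F_1$ whose removal leaves, for each $i\geq 2$, a family $F_i'=\{e\in F_i:e\cap e_1=\emptyset\}$ in the $(n-1)$-balanced 3-partite hypergraph on $(A\setminus\{a_1\})\times(B\setminus\{b_1\})\times(C\setminus\{c_1\})$ satisfying $|F_i'|>(k-2)(n-1)^2$. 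The inductive hypothesis applied to $(F_2',\ldots,F_k')$ then yields a rainbow matching of size $k-1$ which, together with $e_1$, completes a rainbow matching of size $k$.

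The quantitative requirement is that $|\{e\in F_i:e\cap e_1\neq\emptyset\}|\leq d_{F_i}(a_1)+d_{F_i}(b_1)+d_{F_i}(c_1)$ be less than roughly $n^2+O(kn)$ for each $i\geq 2$. Since a single vertex can account for $n^2$ edges of $F_i$, a uniformly random $e_1\in F_1$ typically fails this inequality, and $e_1$ must be chosen carefully. I would split into two regimes according to a threshold $T=\Theta(kn)$ on the vertex degrees $d_{F_i}(v)$. In the \emph{spread} regime, where $d_{F_i}(v)\leq T$ for every vertex $v$ and every color $i$, a Markov-type averaging argument over $e_1\in F_1$ produces an edge whose three endpoints have small combined degree in every other $F_i$, and the induction closes. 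In the \emph{concentrated} regime, where $d_{F_i}(v)>T$ for some color $i$ and some vertex $v$ (say $v\in A$), the bipartite link $L_{F_i}(v)=\{(b,c):(v,b,c)\in F_i\}\subseteq B\times C$ contains more than $(k-1)n$ edges; Theorem \ref{mainbipartite} applied to a suitable family of such link graphs at heavy vertices across colors yields a partial rainbow matching using $v$, reducing the problem to a smaller instance.

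The main obstacle will be the concentrated regime when several $F_i$'s are simultaneously concentrated around a shared small vertex set $S$ with $|S|<k$. The extremal obstruction to Conjecture \ref{sizecondition} is the configuration in which each $F_i$ equals the set of edges meeting a fixed cover $S$ of size $k-1$, giving $|F_i|=(k-1)n^2$ and no rainbow matching of size $k$. The strict inequality $|F_i|>(k-1)n^2$ guarantees that every $F_i$ contains at least one edge disjoint from $S$, but one must assemble these excess edges across the $k$ colors into a compatible rainbow matching. Isolating this near-extremal configuration, either by combining the spread analysis on the complement of $S$ with a direct construction on edges through $S$, or by a subtle greedy argument that processes the colors in an order depending on their concentration, is likely the most intricate component of the proof.
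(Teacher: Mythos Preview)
Your proposal takes a route entirely different from the paper's and, as you yourself flag, leaves the decisive step unresolved. The paper does not induct on $k$. After shifting with respect to an order $v_1,\ldots,v_n$ on the first side $V_1$, it greedily assigns to each $v_j$ ($j\le k$) a distinct color $F_{i_j}$ maximizing $\deg_{F_{i_j}}(v_j)$ among the still-unassigned colors, sets $H_j$ to be the bipartite link of $v_j$ in $F_{i_j}$, and then invokes the Hall-type Theorem~\ref{maintheorem}: if $\sum_{i\in I}|H_i|>n\,|I|(|I|-1)$ for every $I\subseteq[k]$, the bipartite system $(H_1,\ldots,H_k)$ has a rainbow matching, which lifts to one for $\cf$. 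Shifting forces $\deg_{F_i}(v_j)$ to be nonincreasing in $j$, so the $|H_j|$ are descending and a violating $I$ may be taken to be $\{k-t+1,\ldots,k\}$; a short computation then shows that any such violation forces $|F_k|\le (k-1)n^2$. Thus the three-partite problem is reduced in one stroke to a bipartite statement whose proof (Theorem~\ref{maintheorem}) is the paper's main technical contribution; the near-extremal configuration corresponds exactly to tightness of the Hall-type inequality and is absorbed there.

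The genuine gap in your plan is precisely the concentrated regime. Your induction needs an $e_1\in F_1$ whose removal costs each $F_i$ at most $n^2+2(k-2)n-(k-2)$ edges, but in the extremal configuration (all $F_i$ equal to the edges through a common $(k{-}1)$-set $S$, plus one surplus edge each) almost every $e_1\in F_1$ meets $S$, and deleting its $S$-vertex already burns $n^2$ edges of every $F_i$; the $O(kn)$ slack must then simultaneously absorb, for all $k-1$ remaining colors, the losses at the other two coordinates of $e_1$, and there is no reason it should. Your fallback of applying Theorem~\ref{mainbipartite} to link graphs at heavy vertices does not separate the colors: here all $F_i$ share the same heavy vertices with identical links, so one cannot allocate distinct heavy vertices to distinct colors without a further idea. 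What is missing is exactly a device that tolerates wildly unequal link sizes $|H_j|$ while controlling only their partial sums---that device is Theorem~\ref{maintheorem}, and your outline contains no substitute for it.
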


\section{Shifting} \label{shifting}
 The proof in \cite{ekr} uses an operation called ``shifting'', that has since become  a main tool in the area. It is an operation on a
hypergraph $H$, defined with respect to a specific linear ordering
``$<$" on its vertices. For $x<y$ in $V(H)$ define  $s_{xy}(e)=e
\cup {x} \setminus \{y\}$ if $x \not \in e$ and $y \in e$, provided
 $e \cup {x} \setminus \{y\} \not \in H$; otherwise let $s_{xy}(e)=e$.
  We also write $s_{xy}(H)=\{s_{xy}(e) \mid e \in H\}$. If
$s_{xy}(H)=H$ for every pair $x<y$ then $H$ is said to be {\em
shifted}.

Given an $r$-partite hypergraph $G$ with sides $M$ and $W$, and
linear orders on its sides, an {\em $r$-partite shifting} is a
shifting $s_{xy}$ where $x$ and $y$ belong to the same side. $G$ is
said to be {\em $r$-partitely shifted} if $s_{xy}(H)=H$ for all
pairs $x<y$ that belong to the same side.

Given a collection $\ch=(H_i,~~i \in I)$ of hypergraphs, we write
$s_{xy}(\ch)$ for $(s_{xy}(H_i),~~i \in I)$.

\begin{remark}\label{shifted}
Define a partial order on pairs of vertices  by $(v_i,v_j) \le
(v_k,v_\ell)$ if $i \le k$ and $j \le \ell$. Write $(v_i,v_j) <
(v_k,v_\ell)$ if $(v_i,v_j) \le (v_k,v_\ell)$ and $(v_i,v_j) \neq
(v_k,v_\ell)$. A set $F$ being shifted is equivalent to its being
closed downward in this order, which in turn is equivalent to the
fact that the complement of $F$ is closed upward.
\end{remark}

As observed in \cite{erdosplus} (see also \cite{af}) shifting does not increase the matching number of a hypergraph.
This can be generalized to rainbow matchings:

\begin{lemma}\label{shifting}
Let $\cf=(F_i \mid~~i\in I)$ be a collection of hypergraphs, sharing the same
linearly ordered ground set $V$, and let $x<y$ be elements of $V$. If
$s_{xy}(\cf)$ has a rainbow matching, then so does $\cf$.
\end{lemma}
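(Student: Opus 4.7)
The plan is to start with a rainbow matching $\cm' = (e'_i)_{i\in I}$ for $s_{xy}(\cf)$ and to build from it a rainbow matching $(e_i)_{i\in I}$ for $\cf$. For each $i$ I would fix a preimage $e_i \in F_i$ with $s_{xy}(e_i) = e'_i$, taking $e_i = e'_i$ whenever $e'_i \in F_i$. Recall from the definition of $s_{xy}$ that if $e_i \neq e'_i$, then $y \in e_i$, $x \notin e_i$, $e'_i = (e_i \setminus \{y\}) \cup \{x\}$, and $e'_i \notin F_i$; in particular $x \in e'_i$.

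Pairwise disjointness of the $e'_i$ implies that at most one of them contains $x$. If none does, then $e_i = e'_i$ for every $i$ and we are finished. Otherwise, let $i_0$ be the unique index with $x \in e'_{i_0}$. For every $j \neq i_0$ we have $x \notin e'_j$, so the shift cannot have acted nontrivially on the preimage, forcing $e_j = e'_j$. If in addition $e'_{i_0} \in F_{i_0}$ we are again done, so assume $e'_{i_0} \notin F_{i_0}$; then $e_{i_0} = (e'_{i_0} \setminus \{x\}) \cup \{y\}$ acquires the element $y$.

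Any new conflict can only occur at $y$, and only when there is an index $j_0 \neq i_0$ with $y \in e'_{j_0} = e_{j_0}$. (The case $j_0 = i_0$ is impossible: if $y \in e'_{i_0}$, then the image of $e_{i_0}$ under a nontrivial shift would omit $y$, so we would be forced into $e_{i_0} = e'_{i_0} \in F_{i_0}$, contradicting the standing assumption.) I would resolve this conflict by replacing $e_{j_0}$ with $\tilde e_{j_0} := (e_{j_0} \setminus \{y\}) \cup \{x\}$. The key observation is that $\tilde e_{j_0} \in F_{j_0}$: since $e_{j_0}$ contains $y$ but not $x$ and yet $s_{xy}(e_{j_0}) = e_{j_0}$, the shift could only have left $e_{j_0}$ fixed because its would-be image $\tilde e_{j_0}$ was already present in $F_{j_0}$. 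A direct check, using disjointness of the $e'_k$'s together with the fact that the only elements modified are $x$ and $y$, confirms that $(e_k)_{k \neq j_0} \cup \{\tilde e_{j_0}\}$ is a rainbow matching for $\cf$.

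The main difficulty is recognising that the potential obstructions localise at the pair $\{x,y\}$ and involve at most the single index pair $(i_0,j_0)$, and that the very reason $e_{j_0}$ is fixed by the shift -- namely, that $\tilde e_{j_0}$ already lies in $F_{j_0}$ -- is precisely what makes the corrective swap available.
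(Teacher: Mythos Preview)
Your argument is correct and follows essentially the same route as the paper: pull back the rainbow matching to preimages in the $F_i$, observe that at most one edge (the one whose image contains $x$) can have been genuinely shifted, and if this creates a collision at $y$ with some other edge $e_{j_0}$, swap $e_{j_0}$ to $(e_{j_0}\setminus\{y\})\cup\{x\}$, which lies in $F_{j_0}$ precisely because $e_{j_0}$ was fixed by $s_{xy}$. Your write-up is in fact a bit more careful than the paper's about the choice of preimages (taking $e_i=e'_i$ whenever possible) and about why $j_0\neq i_0$.
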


\begin{proof}
Let $s_{xy}(e_i),~~i \in I$, be a rainbow matching for
$s_{xy}(\cf)$. There is at most one $i$ such that $x \in e_i$, say
$e_i=a \cup\{x\}$ (where $a$ is a set).


 If there
is no edge $e_s$ containing $y$, then replacing $e_i$ by $a
\cup\{y\}$ as a representative of $F_i$, leaving all other $e_s$ as
they are, results in a rainbow matching for $\cf$. If there is an
edge $e_s$ containing $y$, say $e_s=b \cup\{y\}$, then there exists
an edge $b \cup\{x\} \in F_s$ (otherwise the edge $e_s$ would have
been shifted to $b \cup\{x\}$). Replacing then $e_i$ by $a
\cup\{y\}$ and $e_s$ by $b \cup\{x\}$ results in a rainbow matching
for $\cf$.
\end{proof}

\section{Conjecture \ref{rainbowgeneral} for $r=2$}

In \cite{erdosplus} the value of $f(n,2,k)$ was determined for all
$k$:

\begin{theorem}
$f(n,2,k)=\max(\binom{2k-1}{2},(k-1)(n-1)-\binom{k-1}{2})$.
\end{theorem}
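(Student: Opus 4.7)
The plan is to pin down $f(n,2,k)$ by proving the matching lower bound via two standard constructions and the matching upper bound via shifting plus a combinatorial analysis. For the lower bound, the clique $K_{2k-1}$ (together with $n-2k+1$ isolated vertices) has $\binom{2k-1}{2}$ edges and matching number $k-1$, while the ``book'' $K_{k-1}\vee\overline{K_{n-k+1}}$ --- obtained by declaring $k-1$ fixed vertices to be universal --- has $\binom{k-1}{2}+(k-1)(n-k+1) = (k-1)(n-1)-\binom{k-1}{2}$ edges, and every edge meets the $k-1$ special vertices, forcing $\nu=k-1$.

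For the upper bound, let $G\subseteq\binom{[n]}{2}$ with $\nu(G)\le k-1$. Applying Lemma~\ref{shifting} to $\cf=(G,\dots,G)$ ($k$ copies) shows that shifting cannot increase $\nu$ and preserves $|E(G)|$, so I may assume $G$ is shifted. The key structural claim is: \emph{a shifted $G$ on $[n]$ satisfies $\nu(G)\ge k$ iff $\{i,2k+1-i\}\in E(G)$ for every $i\in[k]$}. The ``if'' direction is trivial as these $k$ pairs are disjoint and cover $[2k]$. For ``only if'', take a $k$-matching $M$ minimizing $\sum_{v\in V(M)}v$: if some $v\in V(M)$ exceeds $2k$, choose $w\in[2k]\setminus V(M)$ (non-empty since $|V(M)|=2k$) and replace the pair $\{x,v\}\in M$ containing $v$ by $\{x,w\}$, which lies in $E(G)$ by down-closure (as $w<v$) and strictly lowers the sum --- contradiction. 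Hence $V(M)=[2k]$.

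Now fix $i\in[k]$ and set $S_i:=\{2k+1-i,\dots,2k\}$; this has $i$ vertices, each in a distinct pair of $M$. Any pair contained in $S_i$ would have smaller endpoint $\ge 2k+1-i\ge i+1$. So if \emph{every} pair of $M$ meeting $S_i$ had smaller endpoint in $[i-1]$, no pair could lie inside $S_i$, and the $i$ vertices of $S_i$ would come from $i$ distinct pairs with distinct smaller endpoints, all in the $(i-1)$-element set $[i-1]$ --- a contradiction. Hence some pair $\{x,y\}\in M$ satisfies $x\ge i$ and $y\ge 2k+1-i$, and down-closure yields $\{i,2k+1-i\}\in E(G)$, completing the characterization.

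If $\nu(G)\le k-1$, pick $i^*\in[k]$ with $\{i^*,2k+1-i^*\}\notin E(G)$. By down-closure, every pair $\{a,b\}$ with $a\ge i^*$ and $b\ge 2k+1-i^*$ is absent from $E(G)$, so, splitting allowed pairs according to whether $a\le i^*-1$ or ($a\ge i^*$ and $b\le 2k-i^*$),
$$|E(G)|\le h(i^*),\qquad h(i):=(i-1)(n-1)-\binom{i-1}{2}+\binom{2k-2i+1}{2}.$$
A direct calculation gives $h(i+1)-h(i)=n+3i-4k+1$, which is increasing in $i$, so $h$ is convex on $\{1,\dots,k\}$ and attains its maximum at an endpoint; $h(1)=\binom{2k-1}{2}$ and $h(k)=(k-1)(n-1)-\binom{k-1}{2}$ then give the bound. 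The main obstacle is the pigeonhole characterization of shifted graphs with matching of size $\ge k$; once in hand, the remainder is elementary counting plus a one-line convexity check.
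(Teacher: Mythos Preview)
Your proof is correct. The paper itself does not prove this theorem --- it is quoted from Erd\H{o}s--Gallai --- but the paper's proof of the rainbow generalization (Theorem~\ref{af}, attributed to Akiyama--Frankl/Meshulam) specializes, upon taking all $F_i=G$, to precisely your shifting-plus-convexity argument for the upper bound: assume $G$ shifted, observe that the pairs $\{i,2k+1-i\}$ form a $k$-matching so one of them must be absent, and then bound $|E(G)|$ by the convex quadratic $h(i)$ with endpoint values $\binom{2k-1}{2}$ and $(k-1)(n-1)-\binom{k-1}{2}$. Your lower-bound constructions ($K_{2k-1}$ and the book $K_{k-1}\vee\overline{K_{n-k+1}}$) are the standard ones and are not spelled out in the paper.

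One remark on economy: the ``only if'' direction of your characterization (that a shifted $G$ with $\nu(G)\ge k$ must contain \emph{every} edge $\{i,2k+1-i\}$) is correct, and your pigeonhole argument for it is clean, but it is not needed for the theorem. For the upper bound you only use the contrapositive of the trivial ``if'' direction: since the $k$ pairs $\{i,2k+1-i\}$ are disjoint, $\nu(G)\le k-1$ forces one of them to be missing. The paper's proof of Theorem~\ref{af} proceeds exactly this way, without the full characterization.
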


In \cite{af} this result was given a short proof, using shifting.
Meshulam \cite{meshulam} noticed that this proof yields also
Conjecture \ref{rainbowgeneral} for $r=2$:

\begin{theorem}\label{af}
Let $\cf=(F_i, ~~1 \le i \le k)$ be a collection of subsets of
$E(K_n)$. If $|F_i|>
\max(\binom{2k-1}{2},(k-1)(n-1)-\binom{k-1}{2})$ for all $i \le k$
then $\cf$ has a rainbow matching.
\end{theorem}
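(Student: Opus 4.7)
The plan is to reduce to the case where each $F_i$ is shifted via Lemma~\ref{shifting}, and then adapt the Akiyama--Frankl shifting proof of the Erd\H{o}s--Gallai theorem to a rainbow induction on $k$. Shifting preserves $|F_i|$, so the hypothesis is preserved, and Lemma~\ref{shifting} lifts any rainbow matching back to the original family.

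Assume from now on that each $F_i$ is shifted with respect to the order $1 < 2 < \cdots < n$ on $[n]$. Proceed by induction on $k$; the case $k=1$ is immediate. For the inductive step, the key structural observation is that $d_{F_i}(1) \ge 2k-1$ for every $i \le k$. Indeed, since $F_i$ is shifted, $N_{F_i}(1)$ is a downward-closed subset of $\{2,\ldots,n\}$; if $d_{F_i}(1) \le 2k-2$ then $N_{F_i}(1) \subseteq \{2,\ldots,2k-1\}$, and for any edge $\{a,b\}\in F_i$ with $2 \le a < b$ the shift $s_{1a}$ forces $\{1,b\}\in F_i$, so $b \le 2k-1$. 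Hence $F_i \subseteq \binom{[2k-1]}{2}$ and $|F_i| \le \binom{2k-1}{2}$, contrary to the hypothesis.

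Now set $F_i^1 = \{e \in F_i : 1 \notin e\}$, viewed as a shifted graph on $\{2,\ldots,n\}$ (identified with $[n-1]$), and apply the inductive hypothesis to $(F_1^1,\ldots,F_{k-1}^1)$. If every $|F_i^1|$ exceeds the reduced bound $\max(\binom{2k-3}{2},\,(k-2)(n-2) - \binom{k-2}{2})$, the hypothesis produces a rainbow matching $e_1,\ldots,e_{k-1}$ using $2k-2$ vertices of $\{2,\ldots,n\}$; since $|N_{F_k}(1)| \ge 2k-1$, some $v \in N_{F_k}(1)$ lies outside these $2k-2$ vertices, and $e_k = \{1,v\}$ completes a rainbow matching of size $k$. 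Otherwise some $|F_i^1|$ is at most the reduced bound, and
\[
|F_i| \;=\; |F_i^1| + d_{F_i}(1) \;\le\; \max\Bigl(\tbinom{2k-3}{2},\,(k-2)(n-2)-\tbinom{k-2}{2}\Bigr) + (n-1),
\]
which will contradict $|F_i| > \max(\binom{2k-1}{2},(k-1)(n-1)-\binom{k-1}{2})$ once we check that the right-hand side above is bounded by the original maximum.

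The only technical point is this last algebraic check, and it is purely bookkeeping. The identity $(n-1) + (k-2)(n-2) - \binom{k-2}{2} = (k-1)(n-1) - \binom{k-1}{2}$ disposes of one summand exactly; for the other, $(n-1) + \binom{2k-3}{2}$ is at most $\binom{2k-1}{2}$ when $n \le 4k-4$ (using $\binom{2k-1}{2} - \binom{2k-3}{2} = 4k-5$) and at most $(k-1)(n-1) - \binom{k-1}{2}$ when $n \ge 4k-3$ (this reduces to $(3k-1)(k-2) \ge 0$, valid for $k \ge 2$). With that verification in hand, the induction closes and the theorem follows; the entire argument is essentially a clean reduction to vertex $1$, with Lemma~\ref{shifting} doing the work of putting us into the shifted world in the first place.
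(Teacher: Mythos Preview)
Your proof is correct, but it takes a genuinely different route from the paper's. The paper gives a direct, non-inductive argument: after shifting, it simply \emph{declares} the rainbow matching to be $e_i=\{v_i,v_{2k-i+1}\}$ for $i=1,\ldots,k$, and shows that if some $e_i\notin F_i$ then shiftedness forces every edge of $F_i$ to lie in the set of pairs $\{v_p,v_q\}$ with $p<i$ or $q<2k-i+1$; counting these pairs gives a convex quadratic in $i$ whose maximum over $1\le i\le k$ is exactly $\max\bigl(\binom{2k-1}{2},\,(k-1)(n-1)-\binom{k-1}{2}\bigr)$, contradicting the hypothesis.

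Your approach instead inducts on $k$ by peeling off vertex $1$: you first establish $d_{F_i}(1)\ge 2k-1$ from shiftedness, then apply the inductive hypothesis to $F_1^1,\ldots,F_{k-1}^1$ on $[n]\setminus\{1\}$ and use the large degree at $1$ in $F_k$ to extend. The paper's argument is shorter and more transparent---the explicit matching $\{v_i,v_{2k-i+1}\}$ makes the extremal structure visible in one stroke and the convexity neatly explains why the bound is a maximum of two expressions. Your argument, while requiring the case split on $n\lessgtr 4k-4$ at the end, has the virtue of being a clean ``delete a vertex and recurse'' template that one might hope to adapt to other settings. Both rely on Lemma~\ref{shifting} in exactly the same way.
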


\begin{proof}
Enumerate the vertices of $K_n$ as $v_1,v_2,\ldots, v_n$. By Lemma
\ref{shifting} we may assume that all $F_i$'s are shifted with
respect to this enumeration. For each $i \le k$ let
$e_i=(v_i,v_{2k-i+1})$. We claim that the sequence $e_i$ is a
rainbow matching for $\cf$. Assuming negation, there exists $i$ such
that $e_i \not \in F_i$. Since $F_i$ is shifted, every edge
$(v_p,v_q)$ in $F_i$, where $p<q$, satisfies\\

(P)~~ $p<i$ or $q<2k-i+1$.
\\

The number of  pairs satisfying $p<i$ is
$(i-1)(n-1)-\binom{i-1}{2}$. The number of pairs satisfying $p\ge i$
and $q<2k-i+1$ is $\binom{2k-2i+1}{2}$, so

$$|F_i| \le (i-1)(n-1)-\binom{i-1}{2} + \binom{2k-2i+1}{2}$$

This is a convex quadratic expression in $i$, attaining its maximum
either at $i=1$ (in which case $|F_i| \le \binom{2k-1}{2}$) or at
$i=k$ (in which case $|F_i| \le (k-1)(n-1)-\binom{k-1}{2}$). In both
cases we get a contradiction to the assumption on $|F_i|$.
\end{proof}

\section{A Hall-type size condition for rainbow matchings in bipartite graphs}

In this section we prove a result on the existence of rainbow
matchings for a collection of bipartite graphs, all sharing the same
vertex set and bipartition, that will be later used for the proof of
Theorem \ref{generalr3}. This condition is not formulated in terms
of the sizes of the individual graphs, but (a bit reminiscent of the
condition in Hall's theorem) in terms of the sizes of subsets of the
collection of graphs.

\begin{theorem}\label{maintheorem}
Let  $F_i,~~i \le k$ be subsets of $E(K_{n,n})$.
 If

 \begin{equation}\label{condition}
 \sum_{i \in I}|F_i| > n|I|(|I|-1) \mbox{ for every} ~~I \subseteq [k]
 \end{equation}
 then the system $\cf=(F_1, \ldots ,F_k)$ has  a rainbow matching.
  \end{theorem}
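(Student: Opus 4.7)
The plan is to prove Theorem \ref{maintheorem} by induction on $k$. The base case $k=1$ is immediate, since the hypothesis applied to $I=\{1\}$ forces $|F_1|>0$, and any edge of $F_1$ is a rainbow matching of size $1$.

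For the inductive step I would try to find an edge $e_k=(v,w)\in F_k$ whose removal leaves a smaller system to which the induction hypothesis applies. Precisely, setting $F_i' := F_i\setminus (E_v\cup E_w)$ and noting $|F_i\cap(E_v\cup E_w)| = d_{F_i}(v)+d_{F_i}(w)-\mathbf{1}_{vw\in F_i}$, the requirement is that for every $I\subseteq[k-1]$,
$$\sum_{i\in I}\bigl(d_{F_i}(v)+d_{F_i}(w)-\mathbf{1}_{vw\in F_i}\bigr) \;<\; \sum_{i\in I}|F_i|-(n-1)|I|(|I|-1).$$
Then $(F_1',\ldots,F_{k-1}')$ satisfies the hypothesis of Theorem \ref{maintheorem} on $K_{n-1,n-1}$, so by induction it has a rainbow matching $e_1,\ldots,e_{k-1}$ in $K_{n-1,n-1}$, and $e_1,\ldots,e_k$ is a rainbow matching for $\cf$.

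The main obstacle is securing a single edge $(v,w)\in F_k$ that satisfies the displayed inequality \emph{uniformly} in $I$: an averaging argument over the edges of $F_k$ succeeds only for each fixed $I$ in isolation. I would handle this by contradiction. Assuming no such edge exists, each $(v,w)\in F_k$ has a ``witness'' subset $I_{(v,w)}\subseteq[k-1]$ violating the inequality. Using the submodularity of $I\mapsto n|I|(|I|-1)$ together with the hypothesis applied to $I\cup J$ and $I\cap J$, one shows that two intersecting witnesses must be nested, and that disjoint witnesses lead to an immediate contradiction, so the inclusion-minimal witnesses form a chain.

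A convenient packaging is a minimum-counterexample argument: take $\cf$ to be a counterexample with $k$ minimum and then $\sum_i|F_i|$ minimum. Minimality forces every edge of every $F_i$ to lie in some ``tight'' set $I$ achieving $\sum_{i\in I}|F_i|=n|I|(|I|-1)+1$; the submodular analysis above then forces the tight sets to form a chain ending in $[k]$, so in particular $\sum_{i=1}^k|F_i|=nk(k-1)+1$. Since $k$ is minimum, every $(k-1)$-subfamily $(F_i)_{i\neq j}$ already has a rainbow matching $M_j^\ast$, and every edge of $F_j$ must meet $V(M_j^\ast)$, yielding $|F_j|\le(k-1)(2n-k+1)$ for each $j$. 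Summing these bounds against the tight total, and exploiting the freedom to vary the $(k-1)$-rainbow matchings $M_j^\ast$ across the $F_j$'s, should produce the desired contradiction with the hypothesis.
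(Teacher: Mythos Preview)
Your inductive setup is natural, but the argument does not close, and the final paragraph is where it breaks.

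First a terminological point: $I\mapsto n|I|(|I|-1)$ is \emph{supermodular}, not submodular (for $a+b=c+d$ with $c\le a\le b\le d$ one has $c^2+d^2\ge a^2+b^2$). Supermodularity is in fact what your uncrossing uses, and with it your claims that two intersecting tight sets must be nested and that two disjoint tight sets contradict the hypothesis for their union are correct. So in a minimum counterexample the tight sets form a chain and $[k]$ is tight, i.e.\ $\sum_{i=1}^k|F_i|=nk(k-1)+1$.

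The gap is the last step. Your cover bound $|F_j|\le (k-1)(2n-k+1)$ (which is just: $F_j$ is covered by the $2(k-1)$ vertices of $M_j^\ast$) summed over $j$ gives
\[
nk(k-1)+1=\sum_{j=1}^k|F_j|\le k(k-1)(2n-k+1),
\]
which is equivalent to $1\le k(k-1)(n-k+2)$ and is \emph{true} whenever $n\ge k-1$; no contradiction results. The hedge ``should produce the desired contradiction'' together with ``exploiting the freedom to vary the $M_j^\ast$'' is not an argument: varying $M_j^\ast$ only tells you that $F_j$ lies in the intersection of several stars of size $(k-1)(2n-k+1)$, and there is no mechanism here that forces that intersection to be small. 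Likewise, in your first plan (finding a single edge $(v,w)\in F_k$ good for all $I$), the witness-uncrossing shows at best that for each bad edge the family of witnesses is a chain; it does not produce one edge with no witness at all.

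For comparison, the paper's proof proceeds along an entirely different line: it first shifts all $F_i$ bipartitely, orders them by size, and runs a greedy algorithm that at step $t$ picks a longest edge of $F_t$ avoiding previously chosen edges. If the algorithm fails at step $m$, a careful bookkeeping (tracking ``short'' edges, skipped intervals, and two kinds of regains from procrastination and double counting) shows $\sum_{i\le m}|F_i|\le nm(m-1)$, contradicting the hypothesis for $I=[m]$. The argument is global and combinatorial rather than inductive; the length of the paper's proof suggests that a short inductive argument of the kind you sketch is unlikely to exist without a genuinely new idea at the point where yours currently says ``should''.
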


Sharpness of this bound is shown by the example of $k$ sets $F_i$,
 each consisting of all edges incident with a set  of $k-1$ vertices in one side of the bipartite graph.
The analogous result for $r=1$ can be proved directly, or using
Hall's theorem. For $r \ge 3$ the analogous result,
suggested by the same example, is that if
$\sum_{i \in I}|F_i| > n^2|I|(|I|-1)$ for all $I$ then the system $(F_1, \ldots ,F_n)$ has a
rainbow matching. But this is false, as shown by the pair $F_1, F_2$ in
which $F_1$ consists of a single edge and $F_2$ the set of all edges meeting this edge.
Then $|F_2|=n^3-(n-1)^2$, $|F_1|+|F_2|=3n^2-3n$, which for $n>3$ is larger than $2n^2$, and there is no rainbow matching.
It is not clear what is the right condition for general $r$.

\subsection{ An algorithm}

The proof of Theorem \ref{maintheorem} is algorithmic. As before, we
assume that each side of the bipartite graph is linearly ordered,
say $M=(m_1 <m_2 <
 \ldots< m_n)$ and $W=(w_1 <w_2<  \ldots < w_n)$.

 \begin{definition}\label{crossing} Two edges $e,f$ are said to be {\em parallel} if the order between their $M$ vertices is the same as the order between their $W$ vertices. If in this case the vertices of $e$ precede those of $f$, we write $e<f$. Non-parallel edges are said to be {\em crossing}.
 \end{definition}

 By Lemma \ref{shifting}, we may assume that all $F_i$
 are bipartitely shifted with respect to the given orders.

Order the sets $F_i$ by their sizes,
\begin{equation}\label{monotone}
|F_1| \le |F_2| \le \ldots \le |F_k|
\end{equation}

We  choose inductively edges $e_i \in F_i$. As $e_1$ we choose a
longest edge $(m_{c(1)},w_{d(1)})$ in $F_1$,
where the length of an edge $(m_p,w_q)$ in this case is $|q-p|$. By the shiftedness of
$F_1$, either $c(1)=1$ or $d(1)=1$.

Suppose  that  $e_1 \in F_1, ~e_2 \in F_2, \ldots, e_{t-1} \in
F_{t-1}$ have been chosen. Let $Z_t =\bigcup_{j < t} e_j$. Let $a_t$
the first index such that $m_{a_t} \not \in Z_t$, and $b_t$ be the
first index such that $w_{b_t} \not \in Z_t$. Let $R_t =\{m_1,
\ldots ,m_{a_t-1}\} \cup \{w_1, \ldots ,w_{b_t-1}\}$ ($R_1$ is the
empty set).

Let $\tilde{F_t}=F_t[V \setminus Z_t]$ (the set of edges in $F_t$ not meeting $Z_t$).
Define the {\em length} of an edge $(m_p,w_q) \in {F_t}$ as $|(q-b_t)-(p-a_t)|$.
Assuming that $\tilde{F_t}\neq \emptyset$, choose $e_t$ to be a longest edge in $\tilde{F_t}$.
Since $F_t$ is shifted, $e_t$ must contain either $m_{a_t}$ or $w_{b_t}$.

The fact that $e_t\in \tilde{F_t}$ implies inductively that the edges
$e_i,~i \le t$, form a matching. The
proof will be complete if we show that $\tilde{F_t} \neq \emptyset$
for all $t \le n$. \\

The following example illustrates the way the algorithm proceeds. In
it the inequalities of \eqref{condition} are violated, and indeed
the algorithm fails, although in fact there is a rainbow matching.

\begin{example}\label{stealexample} Let $q<n$. Let
$F_1=\{m_cw_d \mid c,d \le q\}$, and let $F_2=F_3= \ldots= F_{q+1}=\{m_cw_d \mid c\le q,~d \le n\} \cup \{m_cw_1 \mid c \le n\}$.
\\
Here $|F_i|=(q+1)n-q$ for all $1<i\le q+1$, and hence $\sum_{i\le {q+1}}|F_i|=q^2+q[(q+1)n-q]=q(q+1)n$, so in this case \eqref{condition} is violated, with equality replacing strict inequality. Indeed, as we shall see, the algorithm fails. Yet, there exists a rainbow matching: $F_1$ is represented by $m_1w_q$, $F_2$ is represented by $m_nw_1$, and  $F_i$ is represented by $m_{i-1}w_{n-i+2}$ for $i>2$.

 Here is how the algorithm goes (we are assuming below that $q\ge 3$):

$$R_1=\emptyset,~e_1=m_qw_1,~R_2=\{w_1\},~e_2=m_1w_n,~R_3=\{m_1,w_1\}, ~e_3=m_2w_{n-1}, \ldots, $$ $$e_q=m_{q-1}w_{n-q+2},~R_{q+1}=\{w_1\} \cup \{m_c \mid c\le q\}.$$

After the choice of $e_q$ there is no possible choice for $e_{q+1}$ and the algorithm halts. Note that in the first step it was also legitimate  to choose $m_1w_q$, which would lead to a rainbow matching.\\

\end{example}

Let us now return to the proof. Suppose, for contradiction, that $\tilde{F}_m = \emptyset$ for some
$m  \le n$. We shall show that this entails a violation of \eqref{condition}, for $I=[m]$.

For each $i <m$ let $c(i),~d(i)$ be such that  $e_i
=(m_{c(i)},w_{d(i)})$. As already noted, by shiftedness either
$c(i)=a_i$ or $d(i)=b_i$. We direct $e_i$, calling one of its
endpoints ``tail'' and the other ``head'', as follows. If $c(i)=a_i$
we call $m_{a_i}$ the {\em tail} of $e_i$, and $w_{d(i)}$ its {\em
head}. Otherwise, we call $w_{d(i)}$ the tail, and $m_{c(i)}$ the
head. We write $tail(e_i)$ for the tail, and $head(e_i)$ for the
head. We clearly have:
\begin{observation}\label{tail}
If $i<j$ then $tail(e_i) \in R_j$.

\end{observation}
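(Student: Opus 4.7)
The plan is to unwind the definitions and use the monotonicity of the sequence $Z_1 \subseteq Z_2 \subseteq \cdots$ of used vertices. Recall that $\mathrm{tail}(e_i)$ is either $m_{a_i}$ or $w_{b_i}$, where $a_i$ (respectively $b_i$) is the smallest index with $m_{a_i}\notin Z_i$ (respectively $w_{b_i}\notin Z_i$), and $R_j=\{m_1,\ldots,m_{a_j-1}\}\cup\{w_1,\ldots,w_{b_j-1}\}$. So the statement $\mathrm{tail}(e_i)\in R_j$ reduces to the inequality $a_i<a_j$ in the first case, and $b_i<b_j$ in the second. The two cases are symmetric, so I will only write out the argument for $\mathrm{tail}(e_i)=m_{a_i}$.

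First I would observe the trivial inclusion: since $e_i\subseteq Z_{i+1}\subseteq Z_j$ (using $i<j$), the vertex $m_{a_i}$ lies in $Z_j$. In particular $a_i\neq a_j$, because $m_{a_j}\notin Z_j$ by definition of $a_j$.

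The heart of the argument is then to rule out $a_i>a_j$. Suppose for contradiction that $a_j<a_i$. By the minimality in the definition of $a_i$, every $m_p$ with $p<a_i$ belongs to $Z_i$; in particular $m_{a_j}\in Z_i$. But $Z_i\subseteq Z_j$, so $m_{a_j}\in Z_j$, which contradicts the very definition of $a_j$. Hence $a_i<a_j$, so $m_{a_i}\in R_j$ as desired.

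I do not expect any real obstacle here: the observation is essentially a bookkeeping statement about the greedy algorithm, saying that the ``frontier'' indices $a_j,b_j$ are nondecreasing in $j$ and strictly increase across any step whose tail lies on the corresponding side. The only thing to be a little careful about is distinguishing the two tail cases and remembering that the chain $Z_1\subseteq Z_2\subseteq\cdots$ is monotone, which is immediate from $Z_j=\bigcup_{s<j}e_s$.
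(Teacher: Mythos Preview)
Your argument is correct; the paper does not even spell out a proof, simply prefacing the observation with ``We clearly have,'' so your unwinding of the definitions is exactly the intended reasoning. The key points you identify---that $m_{a_i}\in e_i\subseteq Z_j$ and that all $m_p$ with $p<a_i$ already lie in $Z_i\subseteq Z_j$, forcing $a_j>a_i$---are precisely what makes the statement immediate.
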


\subsection{Short edges}
We call the edges $e_i$ contained in $R_m$ {\em short} and an edge
not contained in $R_m$ {\em long}. Let $\eij, ~~j <p$, be the short
edges, where $i_1<i_2<\ldots< i_{p-1}$ (so, there are $p-1$ short
edges). Define $i_0=0$ and $i_p=m$. To understand the significance
of short edges, note that if there are no short edges
 then $|R_m|=m-1$. Since
$\tilde{F}_m =\emptyset$,   the set $R_m$ is a cover for $F_m$, and hence
 $|F_m| \le (m-1)n$. By \eqref{monotone} this implies that $\sum_{i \le m}|F_i|
\le m(m-1)n$, contradicting the assumption of the theorem.\\

\begin{example}
In Example \ref{stealexample} there is only one short edge, $e_1$.
\end{example}

For $j<p$ let $\ell_j^W$ be the length of the longest edge in
$\tfij$ containing $m_{a_{i_j}}$ and let $\ell_j^M$ be the length of
the longest edge in $\tfij$ containing $w_{b_{i_j}}$. Let
$SKIP^M_j=\{m_{a_{i_j}}, m_{a_{i_j}+1}, \ldots, m_{a_{i_j}+\ljm}\}$
and
 $SKIP^W_j=\{w_{b_{i_j}}, w_{b_{i_j}+1}, \ldots, w_{b_{i_j}+\ljw}\}$.

We  denote by $T_j^M$ (resp. $T_j^W$) the longest contiguous stretch of vertices in $Z_{i_j} \cap M$ (resp. $Z_{i_j} \cap W$)  starting right after $\skipjm$ (resp. $\skipjw$),
and let $t_j^M=|T_j^M|$, $t_j^W=|T_j^W|$. See Figures 1 and 2.

\begin{figure}
\centering
\begin{minipage}{.5\textwidth}
  \centering
\vspace{-1.33cm}
  \includegraphics[width=.75\linewidth]{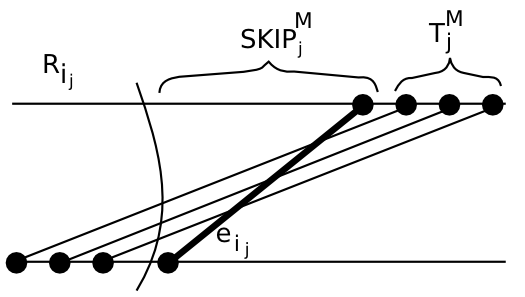}
\vspace{.66cm}
  \captionof{figure}{$\skipmj,~
  \Tmj$ and $\eij$}
  \label{fig:1}
\end{minipage}%
\begin{minipage}{.5\textwidth}
  \centering
  \includegraphics[width=.75\linewidth]{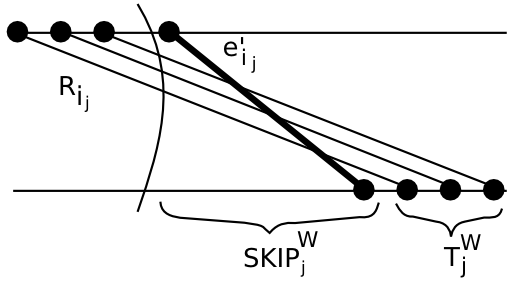}
    \captionof{figure}{$\skipwj$ and   $\Twj$. Here $e_{i_j}'$ is the longest edge in $\tfij$ starting at $a_i$.}
  \label{fig:2}
\end{minipage}
\end{figure}

\section{Bounding $\sum |F_i|$ from above}

\subsection{ A toy case - one short edge}
Our aim is  now to delve into calculations showing that under the negation
assumption $\sum_{i \le m}|F_i|< nm(m-1)$. To demonstrate  the type
of arguments involved in the general proof, let us  consider
separately the  case in which there is only one short edge, say
$e_{i}$. It may be worth following the arguments in Example \ref{stealexample}, in which as mentioned above there is only one short edge.

Recall that either $c(i)=a_i$ or $d(i)=b_i$, and without loss of
generality assume the latter, implying that $d(i)=\min\{j \mid w_j
\not \in R_i\}$.

 Write $\ell$ for $\ell^M_1$, namely the length of $e_i$ (to understand the subscript $1$ in $\ell^M_1$ remember that $i_1=i$).
The edge $e_i$
skips $\ell$ vertices in $R_m$, each being matched by some edge
$e_j,~~i<j<m$, and hence $\ell \le m-i$.

Clearly, $|R_m|=m$, and since $R_m$ is a cover for $F_m$ it
follows that $|F_m| \le mn$. But in this calculation each of the
$\ell$ edges  $(m_{c(i)},w_j)$ for  $j=b_i,b_i+1,\dots,
b_i+\ell-1$, being contained in $R_m$,  is counted twice, from the
direction of $m_{c(i)}$ and from the direction of $w_j$. Thus we know
that:
$$|F_m| \le mn - \ell$$

Since no edge $e_q,~~q<i$, satisfies $e_q<e_i$, we have $|R_i|=i-1$,
and  the number of edges in $F_i$ incident with $R_i$ is thus at
most $(i-1)n$, and by the definition of $\ell$ we have $|F_i| \le
(i-1)n+\ell^2.$ Hence

$$\sum_{q \le m}|F_q| \le i|F_i|+(m-i)|F_m| \le
i((i-1)n+\ell^2)+(m-i)(mn-\ell).$$

Hence

$$m(m-1)n-\sum_{q \le m}|F_q| \ge
m(m-1)n-[i((i-1)n+\ell^2)+(m-i)(mn-\ell)]=(i-1)(m-i)n+(m-i)\ell -
i\ell^2$$
$$=[(i-1)(m-i)n-(i-1)\ell^2]+[(m-i)\ell-\ell^2]$$

Since $\ell \le m-i$ and $\ell \le n$ both bracketed terms are
non-negative, so $m(m-1)n-\sum_{q \le m}|F_q| \ge
0$, reaching the desired contradiction.

\subsection{ Using the short edges as landmarks and a first point of reference}

Let us now turn to the proof of the general case. For $1 \le j \le p-1$ write $s_j=i_j-i_{j-1}$ and let
 $S_j=\{i_{j-1}+1,i_{j-1}+2,\ldots,i_j\}$, so that $|S_j|=s_j$.

By (\ref{monotone}) $|F_k|\le |F_{i_j}|$ for every $k \in S_j$, and hence

\begin{equation}\label{sum}
\sum_{k \le m}|F_k| \le \sum_{j \le p}s_j|\fij|
\end{equation}

The vertices in $\rij$ are  of degree at most $n$, and hence the
number of edges in $\fij$ incident with $\rij$ is at most $n|\rij|$.
We use $n|\rij|$ as a baseline estimate on $|\fij|$. In this
estimate we are ignoring the edges of $\fij$ not incident with
$R_{i_j}$, and also the double counting of edges.

 If there are no
short edges then  $|R_m|=m-1$, and hence $|F_m|\le n|R_m| =(m-1)n$.
Since $|F_i|\le |F_m|$ for all $i\le m$, we have $\sum_{i \le
m}|F_i| \le m(m-1)n$, a contradiction.
 We shall use this calculation as a first point of reference, and to get the real quantities we shall measure the deviations from the estimate
  $|F_i|=(m-1)n$.

The existence of short edges affects the estimate of $\sum_{j \le
p}s_j|\Fij|$ in two ways - adding something to it, and deducting
something.
  The first we call ``loss'', since it takes us further away from the desired contradiction,
and the second  is called  ``gain''. We shall associate a gain $G_j$ and a  loss $L_j$ with each short edge $\eij$,
and we shall show that $G_j \ge L_j$ for every $j\le p$.  Note that our calculation is not uniquely determined,
since adding  the same number to  $G_j$ and to $L_j$ does not change the total balance.

 Clearly, $|\rij|$
 is $i_j-1$, plus the number of short  edges contained in $\rij$. Compared with the  estimate $|\rij|=m-1$ above, the estimate
 $|\rij|=i_j-1$
gives a gain of $m-i_j$ on $|\rij|$, yielding a gain of $n(m-i_j)$ on the estimate $n|\rij|$ of $|\fij|$, which yields a total
gain of $$s_j(m-i_j)n$$ in \eqref{sum}.
In order to obtain an estimate serving as a second point of reference, we assume that $e_{i_j}\subseteq R_{i_k}$ for all $k>j$.
This  entails a loss of $s_kn$ in \eqref{sum} for each such $k$, so altogether there is a loss of

$$
n(s_{j+1}+s_{j+2}+\ldots +s_p)=n(m-i_j).
$$

So, the net gain with respect to the baseline estimate   is so far $s_j(m-i_j)n - n(m-i_j)=(s_j-1)(m-i_j)n$. Writing

\begin{equation}\label{firstnetgain}
G^{BASIC}_j=(s_j-1)(m-i_j)n
\end{equation}

we can use $G^{BASIC}_j$ as a baseline gain.

\subsection{The loss on edges outside $\rij$}
In the above calculation there is an overoptimistic assumption: that
all edges in $\fij$ are  incident with $\rij$. In fact this  is
false for all $j<p$. By shiftedness and the definition of $\lijm,~
\lijw,~ T_j^M$ and $T_j^W$ there can be at most
$(\lijm+t^M_j)(\lijw+\tjw)$ edges that are not incident with $\rij$.

  Remembering that  $|F_{i_j}|$ is multiplied by $s_j$ in \eqref{sum}, this
entails a possible loss of:

\begin{equation} \label{sjlambdaj}
L_j:=s_j(\lijm+t^M_j)(\lijw+\tjw)
\end{equation}

This is the only loss we encounter, besides the loss incurred by
short edges being contained in sets $\rij$, that has already been
subsumed in $G^{BASIC}_j$.

\subsection{Two types of regains}
We shall use two types of regains, related to two ways in which $|\fij|$
was overestimated.
\begin{enumerate}
\item
Gains on procrastination. If $k<j$ we were assuming above that $\eik
\subseteq R_{i_j}$. When this does not happen we say that $j$
{\em procrastinates with respect to $k$} (meaning that $\rij$ is late to capture the edge $\eik$), and then $|\Rij|$ was
overestimated by $1$, giving rise to a gain of $n$ in $|\fij|$, and
to a gain of $s_jn$ in the total sum.
\item
Gains on double counting. In the basic estimate $n|\rij|$ of the number
of edges incident with $\rij$ there is an overestimate of $1$ on
each pair $(u,v)$ of vertices in $\rij$, where $u \in M$ and $v \in
W$. This entitles us to a gain of $s_j$ in the total sum.
\end{enumerate}

\subsection{A first gain on double counting, and a first offset with $L_j$}\hfill

Without loss of generality we may (and will) assume that $\ljm \ge \ljw$, and
that $tail(\eij) \in W$. Then

\begin{equation}\label{fulllj}
L_j \le s_j[\ljm(\ljw+\tjm+\tjw)+\tjm\tjw]
\end{equation}

Here we turn to our first gain on double counting.
Let $E_j=\{e_i \mid i < i_j\}$ be the partial rainbow matching chosen so far. Let $\bar{T}^M_j=E_j[T^W_j]$ (namely the set of vertices in $M$ matched by $E_j$ to $\Twj$), and let $\bar{T}^W_j=E_j[T^M_j]$.
The edges of $\btjm \times \btjw$ were counted twice in the estimate $n\rij$ of the number of edges incident with $\rij$. This entitles us to a gain of $\tjm\tjw$ in the calculation of $|\fij|$, which results in a regain of $s_j\tjm\tjw$ in \eqref{sum}. Offsetting this with part of $L_j$ as appearing in \eqref{fulllj}, and writing

\begin{equation}\label{defoflambdaj} \lambda_j:=\ljw+\tjm+\tjw, \end{equation}

  this leaves us with a loss of at  most

 \begin{equation}\label{lrj} L^r_j:=s_j\ljm\lambda_j \end{equation}

   The superscript $r$ stands for ``remaining''. This loss should be offset by $G^{BASIC}_j$ and by other gains.

The following is clear:

 \begin{observation}\label{lambdalessthann}
$\lambda_j <n$.
\end{observation}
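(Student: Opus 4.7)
The plan is to decompose $\lambda_j = \ljw + \tjm + \tjw$ into a ``$W$-part'' $\ljw+\tjw$ and an ``$M$-part'' $\tjm$, bound each separately, and add the two to obtain $\lambda_j \le n-1$.

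The $W$-part is immediate from the definitions: $\skipjw \cup \Twj$ occupies the consecutive positions $b_{i_j}, b_{i_j}+1, \ldots, b_{i_j}+\ljw+\tjw$ in $W$, and these indices do not exceed $n$, so $\ljw + \tjw \le n - b_{i_j}$.

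The real work is to show $\tjm \le b_{i_j}-1$. I plan to prove this by locating the set $\btwj = E_j[\Tjm]$ of $W$-vertices matched by $E_j$ to the $M$-vertices of $\Tjm$ inside $\{w_1,\ldots,w_{b_{i_j}-1}\}$. Pick $m_k \in \Tjm$, so $k \ge a_{i_j}+\ljm+1 > a_{i_j}$, and let $e_s=(m_k,w_\ell) \in E_j$; the monotonicity $Z_s \subseteq Z_{i_j}$ gives $a_s \le a_{i_j}$ and $b_s \le b_{i_j}$. Since $F_s$ is bipartitely shifted, the algorithm's choice of $e_s$ forces $c(s)=a_s$ or $d(s)=b_s$, and $k > a_{i_j} \ge a_s$ rules out the first, so $\ell = b_s$. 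Now $w_\ell = w_{b_s} \in Z_{s+1} \subseteq Z_{i_j}$ while $w_{b_{i_j}} \notin Z_{i_j}$, so $b_s \ne b_{i_j}$; combined with $b_s \le b_{i_j}$ this forces $b_s < b_{i_j}$, and hence $\ell < b_{i_j}$. Because $E_j$ is a matching, $\tjm = |\btwj| \le b_{i_j}-1$.

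Adding the two bounds yields $\lambda_j \le (n-b_{i_j}) + (b_{i_j}-1) = n-1 < n$. The only real obstacle is the structural claim in the middle paragraph---that the $W$-endpoint of any edge of $E_j$ meeting $\Tjm$ lies strictly below $b_{i_j}$---and it rests on the ``$e_s$ contains $m_{a_s}$ or $w_{b_s}$'' property guaranteed by shiftedness, together with the monotonicity of $(a_t,b_t)$ in $t$.
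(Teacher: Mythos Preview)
Your proof is correct and follows essentially the same idea the paper has in mind. The paper does not spell out a proof of this observation (it is declared ``clear''), but the justification given later for Lemma~\ref{nistheking} makes the intended argument explicit: $\ljw$, $\tjm$, and $\tjw$ are the sizes of pairwise disjoint subsets of $W$, namely (a subset of) $\skipjw$, $E_j[\Tjm]=\btwj$, and $\Tjw$. Your proof is a careful unpacking of exactly this---you show $\btwj\subseteq\{w_1,\ldots,w_{b_{i_j}-1}\}$ while $\skipjw\cup\Tjw\subseteq\{w_{b_{i_j}},\ldots,w_n\}$, which is precisely the disjointness claim, with the strict inequality coming from the extra element $w_{b_{i_j}}\in\skipjw$.
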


\section{Gains associated with vertices in $SKIP^M_j$}

\subsection{Six types of vertices in $\skipmj$ and the regains associated with them}

\begin{notation}\label{iv}
For $v \in R_m$ let $i(v)$ be the index $i$ for which $v \in e_i$,
and let $k(v)$ be the index $k <p$ such that $i(v) \in S_k$.
\end{notation}

\begin{notation}
Let $\Sigma_j$ be the set of short edges contained in $\rij$, and
let $|\Sigma_j|=\sigma_j$. Also let $M_j=M \cap \rij$ and $\mu_j=|M_j|$.
\end{notation}

\begin{notation}\label{omega}
Let $\omega=\omega(j)=\min(k: \rik \supseteq \eij)$.
\end{notation}

\begin{lemma}\label{lambdasmall}
If $k<j$ and $head(e_{i_k}) \in \skipmj$ then $\lambda_k <
\mu_j+\ell_j^M$.
\end{lemma}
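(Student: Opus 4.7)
The hypothesis $head(\eik) \in \skipjm$ (with $\skipjm \subseteq M$) forces $tail(\eik) = w_{b_{i_k}}$ and $head(\eik) = m_{c(i_k)}$, where $c(i_k) = a_{i_k} + \lkm = \mu_k + 1 + \lkm$. Since $\eik$ is a longest edge of $\tilde{F}_{i_k}$ and contains its tail $w_{b_{i_k}}$, its length equals $\lkm$, whence $\ell^W_k \le \lkm$. Writing $c(i_j) := a_{i_j} + \ljm = \mu_j + 1 + \ljm$, the hypothesis reads $\mu_j + 1 \le c(i_k) \le c(i_j)$. The plan is to combine the $M$-side inequality $\mu_k + \lkm + \tkm < \mu_j + \ljm$ with the $W$-side inequality $t^W_k \le \mu_k$; together with $\ell^W_k \le \lkm$ they yield
$$\lambda_k \;=\; \ell^W_k + \tkm + t^W_k \;\le\; \lkm + \tkm + \mu_k \;<\; \mu_j + \ljm.$$

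For the $M$-side inequality, the idea is to use that $T^M_k \subseteq Z_{i_k} \subseteq Z_{i_j}$, whereas $m_{c(i_j)}$, as the head of $\eij$, is absent from $Z_{i_j}$. Since $m_{c(i_k)}$ is already in $Z_{i_j}$ (used by $\eik$), we cannot have $c(i_k) = c(i_j)$; combined with $c(i_k) \le c(i_j)$ this forces $c(i_k) < c(i_j)$. If $c(i_k) + \tkm \ge c(i_j)$ then $m_{c(i_j)}$ would be a vertex of the contiguous stretch $T^M_k$ (which starts at $m_{c(i_k)+1}$ and lies entirely in $Z_{i_j}$), contradicting $m_{c(i_j)} \notin Z_{i_j}$. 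Hence $c(i_k) + \tkm < c(i_j)$, which after substitution is the desired bound.

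For the $W$-side inequality, I would pick any $w_q \in T^W_k$ and let $e_s$ be the (unique) edge with $s < i_k$ having $w_q$ as its $W$-endpoint. Since $q > b_{i_k} + \ell^W_k \ge b_{i_k} \ge b_s$, we have $q \neq b_s$, so the tail of $e_s$ cannot be $w_{b_s}$ and must therefore be $m_{a_s}$. Observation~\ref{tail} places this tail in $R_{i_k}$, so $a_s \le \mu_k$. Distinct vertices in $T^W_k$ come from distinct edges $e_s$ and hence from distinct indices $a_s$; this gives an injection $T^W_k \hookrightarrow \{m_1,\ldots,m_{\mu_k}\}$, proving $t^W_k \le \mu_k$.

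The main obstacle I foresee is the $W$-side bound: the hypothesis constrains only the $M$-side position of $\eik$'s head relative to $\eij$, and no symmetric $W$-side condition is directly available. The key trick is to exploit the head/tail convention together with Observation~\ref{tail} to reinterpret $t^W_k$ as a count of tails of earlier edges, which are forced to lie in the initial segment $M \cap R_{i_k}$ of size $\mu_k$.
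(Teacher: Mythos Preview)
Your proof is correct and follows essentially the same route as the paper's. The paper compresses the argument into a single strict containment
\[
E_k[T_k^W] \cup \skipmk \cup T^M_k \;\subsetneq\; (R_{i_j}\cap M)\cup \skipmj,
\]
with disjoint unions on both sides and $head(\eij)$ witnessing strictness; counting sizes yields $t^W_k+\lkm+t^M_k<\mu_j+\ljm$, and then $\ell^W_k\le\lkm$ finishes. You unpack the same facts into two separate inequalities---your $M$-side bound $c(i_k)+t^M_k<c(i_j)$ is exactly what shows $\skipmk\cup T^M_k$ fits into the right-hand side, and your $W$-side bound $t^W_k\le\mu_k$ (via Observation~\ref{tail}) is exactly what shows $E_k[T^W_k]\subseteq M_k$ and hence lies disjointly from $\skipmk\cup T^M_k$---and then add. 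The packaging differs, but the underlying observations are identical.
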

\begin{proof}
This follows from the fact that

$$E_k[T_k^W] \cup \skipkm \cup \Tkm \subsetneqq (R_j \cap M) \cup
\skipjm$$

and on both sides the terms of the union are disjoint. The reason
for the strict containment is that $head(e_{i_j})$ belongs to the
right hand side and not to the left. In fact, the strict inequality
in the lemma will not be used, it is only mentioned for
clarification.
\end{proof}

\begin{lemma}\label{lambdalessthanmu}
 $\lambda_j \le \mu_j+\ljm +\tjw$.
\end{lemma}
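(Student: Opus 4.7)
The desired inequality $\lambda_j \le \mu_j + \ljm + \tjw$ is, by the definition $\lambda_j = \ljw + \tjm + \tjw$, equivalent to the cleaner form
\[
\ljw + \tjm \le \mu_j + \ljm.
\]
I would aim for this form. Note that $\mu_j = a_{i_j}-1$, $|\skipjm| = \ljm+1$, and $|\skipjw|=\ljw+1$, so the inequality is really saying that the set $\skipjw \sqcup \Tjm$ (size $\ljw+1+\tjm$) is no larger than $M_j \sqcup \skipjm$ (size $\mu_j+\ljm+1$).

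The structural fact I would use is that the $E_j$-partners of $\Tjm$ all lie in $W_j$. To see this, fix $m^*=m_{a_{i_j}+\ljm+q}\in\Tjm$ and let $e_s\in E_j$ be the edge of the partial matching containing $m^*$. Since the sequence $(a_t)$ is non-decreasing we have $a_s\le a_{i_j}<a_{i_j}+\ljm+q$, so $m_{a_s}\ne m^*$. The algorithm's rule (together with shiftedness of $F_s$) forces $e_s$ to contain either $m_{a_s}$ or $w_{b_s}$; hence $e_s$ must contain $w_{b_s}$. Then $w_{b_s}$ was used at step $s<i_j$, so $b_s<b_{i_j}$, i.e.\ $w_{b_s}\in W_j$. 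This gives $\btjw\subseteq W_j$, and in particular $\tjm\le b_{i_j}-1$. A symmetric argument (interchanging $M$ and $W$) shows $\btjm\subseteq M_j$, so $\tjw\le \mu_j$.

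The plan then is to construct an injection $\phi:\skipjw\sqcup\Tjm\hookrightarrow \skipjm\cup M_j$. On $\skipjw$ I would use the WLOG $\ljw\le \ljm$ and send $w_{b_{i_j}+k}\mapsto m_{a_{i_j}+k}$ for $k=0,\dots,\ljw$, occupying the first $\ljw+1$ vertices of $\skipjm$. It then remains to inject $\Tjm$ into the residual target $M_j\cup\{m_{a_{i_j}+\ljw+1},\dots,m_{a_{i_j}+\ljm}\}$ of size $\mu_j+(\ljm-\ljw)$. Here the partner bookkeeping should enter: each $m^*=m_{a_{i_j}+\ljm+q}\in\Tjm$ has a distinct associated step $s_q$ with a distinct $b_{s_q}\in[1,b_{i_j}-1]$, and I would try to assign $m^*$ a target in $M_j\cup\skipjm$ using $a_{s_q}$ or $b_{s_q}$.

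The main obstacle is exactly this second injection. The raw set bound $\btjw\cup\skipjw\subseteq W_j\cup\skipjw$ only gives $\tjm+\ljw\le b_{i_j}-1+\ljw$, which is weaker than the target $\mu_j+\ljm$ precisely when $b_{i_j}>a_{i_j}$. To close this gap in the hard case, I expect one must use the length-maximality of $e_{s_q}$ in $\tfij[V\setminus Z_{s_q}]$: the length of $e_{s_q}$ is $a_{i_j}+\ljm+q-a_{s_q}$, so every edge of $\tilde F_{s_q}$ from $m_{a_{s_q}}$ has length at most this, constraining where unused $W$-vertices can sit (e.g.\ limiting the ``reach'' $b_{s_q}+\ell^{W,(s_q)}$) and forcing the $b_{s_q}$'s to land in positions compatible with the desired target set. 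Making this bookkeeping work — showing that the $\tjm$ targets can be chosen injectively inside $M_j\cup(\skipjm\setminus\{m_{a_{i_j}},\dots,m_{a_{i_j}+\ljw}\})$ — is the technical heart of the proof.
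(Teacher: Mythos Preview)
The paper's proof is two sentences: it invokes the standing assumption $\ljw \le \ljm$ and the containment $E_j[\Tjw]\subseteq M_j$, which gives $\tjw \le \mu_j$. Adding these yields $\ljw + \tjw \le \ljm + \mu_j$, i.e.\ $\lambda_j \le \mu_j + \ljm + \tjm$. Note carefully: this is \emph{not} the inequality as printed. The printed statement has $\tjw$ where $\tjm$ is meant; the two places the lemma is actually applied (cases (2b) and (3) of the classification) both need precisely $\mu_j + \ljm + \tjm \ge \lambda_j$, confirming the typo.

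You are chasing the literal printed inequality $\ljw + \tjm \le \mu_j + \ljm$, and the obstacle you flag is genuine: your partner argument only places $E_j[\Tjm]$ inside $W_j$, giving $\tjm \le b_{i_j}-1$, and nothing in the WLOG forces $b_{i_j}\le a_{i_j}$, so the gap cannot be closed by the injection scheme you outline. In fact one can cook up configurations (e.g.\ $R_{i_j}$ entirely on the $W$ side, with $\Tjm$ nonempty) where $\tjm>\mu_j$ and the literal inequality fails. The good news is that your second paragraph already contains the entire intended proof: your ``symmetric argument'' establishes $\btjm=E_j[\Tjw]\subseteq M_j$, hence $\tjw\le\mu_j$; together with $\ljw\le\ljm$ this is exactly the paper's argument for the corrected statement. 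Discard the injection machinery.
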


This follows from the fact that $\ljw \le \ljm$ and
$E_j[\Tjw]\subseteq M_j$.

\begin{lemma}\label{romega}
$R_{i_\omega} \supseteq \Tjm$
\end{lemma}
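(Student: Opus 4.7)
My plan is to pin down $a_{i_\omega}$ precisely enough to show that every vertex of $T_j^M$ has index strictly below $a_{i_\omega}$, which is exactly what the inclusion $T_j^M\subseteq R_{i_\omega}$ demands, since $T_j^M\subseteq M$ and $R_{i_\omega}\cap M=\{m_1,\dots,m_{a_{i_\omega}-1}\}$.

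First I would invoke the blanket WLOG of Section~5.5 that $\ljm\ge \ljw$ with $tail(\eij)\in W$. Combined with the fact that $\fij$ is $r$-partitely shifted, the longest edge of $\tfij$ through $w_{b_{i_j}}$ is forced to be parallel and hence to equal $(m_{a_{i_j}+\ljm},w_{b_{i_j}})$, which is precisely $\eij$. Thus $c(i_j)=a_{i_j}+\ljm$, and the hypothesis $R_{i_\omega}\supseteq \eij$ yields $c(i_j)<a_{i_\omega}$, i.e.
\[
a_{i_\omega}\ \ge\ a_{i_j}+\ljm+1.
\]

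Next I would observe that $\omega>j$: indeed $R_{i_j}\subseteq Z_{i_j}$, while neither endpoint of $\eij$ belongs to $Z_{i_j}$, so $R_{i_j}\not\supseteq \eij$. Consequently $i_\omega>i_j$, so $Z_{i_j}\subseteq Z_{i_\omega}$. By the very definition of $T_j^M$ every vertex of $T_j^M$ sits in $Z_{i_j}$, hence in $Z_{i_\omega}$; yet $m_{a_{i_\omega}}\notin Z_{i_\omega}$ by the definition of $a_{i_\omega}$. Therefore $a_{i_\omega}$ cannot coincide with any index in the range $\{a_{i_j}+\ljm+1,\dots,a_{i_j}+\ljm+\tjm\}$. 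Combined with the displayed inequality this forces $a_{i_\omega}\ge a_{i_j}+\ljm+\tjm+1$, so every index occurring in $T_j^M$ is strictly less than $a_{i_\omega}$, establishing $T_j^M\subseteq R_{i_\omega}$.

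The only mildly delicate point is the first step, identifying $\eij$ with the parallel edge $(m_{a_{i_j}+\ljm},w_{b_{i_j}})$; this requires both the shiftedness of $\fij$ and the WLOG ordering $\ljm\ge\ljw$ from Section~5.5. Everything else is a direct unwinding of the definitions of $T_j^M$, $\omega$ and $R_{i_\omega}$.
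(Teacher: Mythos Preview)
Your argument is correct and follows essentially the same route as the paper's own proof: both hinge on the observation that $head(\eij)=m_{a_{i_j}+\ljm}$ lies in $R_{i_\omega}$ (so $a_{i_\omega}>a_{i_j}+\ljm$), and that the block $\Tjm$, being a contiguous stretch of $Z_{i_j}\subseteq Z_{i_\omega}$ immediately following this vertex, must therefore also lie below index $a_{i_\omega}$ and hence inside $R_{i_\omega}$. Your version is in fact more carefully spelled out than the paper's one-sentence sketch.

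Two minor remarks on wording, neither affecting correctness. First, the identification $c(i_j)=a_{i_j}+\ljm$ does not really require shiftedness or any notion of ``parallel'' (which in this paper is a relation between \emph{two} edges, not a property of a single edge): it follows directly from $m_{c(i_j)}\notin Z_{i_j}$, which forces $c(i_j)\ge a_{i_j}$, together with the length formula $|c(i_j)-a_{i_j}|=\ljm$. Second, your observation that $\omega>j$ is correct and worth stating (the paper leaves it implicit); note also that $\omega$ is well-defined since $\eij$ is short, i.e.\ $\eij\subseteq R_m=R_{i_p}$.
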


\begin{proof} By the definition of $\Tjm$ the vertex $head(\eij)$ is adjacent to its first element, so the initial segment of $head(\eij)$ in $M$, together with $\Tjm$, is an  interval contained in $Z_{i_j}$.
Applying the definition of
 $R_{i_\omega}$ yields the lemma. \end{proof}

We write $ L^r_j$ as a sum: $$L^r_j=L^a_j + L^b_j$$
where
 \begin{equation} \label{splitlr}
 L^a_j=\ljm (s_j-1)\lambda_j,~~~L^b_j=\ljm\lambda_j.
 \end{equation}

The expression \eqref{firstnetgain} for $G^{BASIC}_j$ explains why  this splitting will be useful: $G^{BASIC}_j$ will count towards offsetting $L^a_j$.

We shall have two ``baskets'' of gains for each $j$, which we shall call $G_j^a$ (intended to compensate for $L_j^a$) and $G_j^b$ (intended to compensate for $L_j^b$).
To compensate for $L^b_j$, we need to assign to
each of the $\ljm$ vertices  in $\skipmj$  a gain of at least $\lambda_j$, which is given to $G^b_j$.

For the purpose of bookkeeping, we gather the vertices of $\skipmj$
into six types, according to the conditions they satisfy. Vertices
of types (2b) and (3) below will give rise to regains on double
counting, while all other types will give rise to regains on
procrastination. In all these cases a gain is given to $G^b_h$,
where $h$ is the smaller of $j$ and $k$, namely if $j<k=k(v)$ at
least $\lambda_j$ is given to $G^b_j$, and if $k=k(v)<j$ at least
$\lambda_k$ is given to $G^b_k$.

In two of the cases, namely (2ai) and (1),  the gain will be split
between the two indices. The part
given to the larger index will go to $G^a$ of that index.

Here are the explicit classification and the rules by which gains
are shared. The regain of $\lambda_j$ for each vertex $v \in
\skipmj$ will be apparent in each of the cases, while the regains
accumulating to $G^a_j$ will be collected at the end.

\begin{enumerate}

\item
$k(v) <j$, implying that
 $v=head(\eik)$ (see Figure 3).  In this case $j$ procrastinates with respect to $k$, entitling us to a gain of $n$ on $|\fij|$, and $s_jn$ in total.
This gain we split between $G^b_j$,   $G^a_k$ and $G^b_k$, as follows: $G^a_k$ gets
$(s_j-1)(\mu_j+\ljm)$,  $G^a_j$ gets $(s_j-1)(n-\mu_j-\ljm)$ and $G^b_j$ gets $n$.\\

\begin{figure}
  \centering
  \includegraphics[width=.375\linewidth]{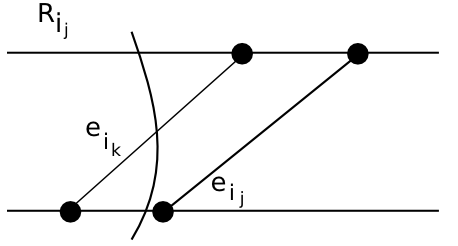}
  \captionof{figure}{Case (1)}
  \label{fig:5}

\end{figure}

Denote by  $A_j$  the set of vertices of type (1), and let
$\alpha_j=|A_j|$. The accumulating regain  in $G_j^a$ in this way
 is

\begin{equation}\label{regainontype1}
(s_j-1)(n-\mu_j-\ljm)\alpha_j
\end{equation}

$G_j^b$ gets  $n\alpha_j$, and since  $\lambda_j<n$ this means that it gets more than $\lambda_j$ for each vertex of this type, as promised.

\begin{figure}
\centering
\begin{minipage}{.5\textwidth}
  \centering
  \includegraphics[width=.75\linewidth]{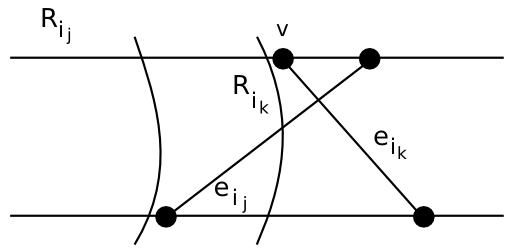}
  \captionof{figure}{Case 2a(i), one type of crossing}
  \label{fig:3}
\end{minipage}%
\begin{minipage}{.5\textwidth}
  \centering
  \includegraphics[width=.75\linewidth]{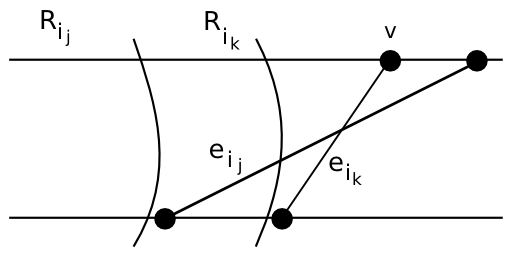}
    \captionof{figure}{Case 2a(i), another type of crossing}
  \label{fig:4}
\end{minipage}
\end{figure}

\item
$j< k(v)$.  This we divide into the following subcases:
\begin{enumerate}
\item
$e(v)$ is long and $k=k(v)<\omega$ and $\eij \not \subseteq \Rik$,  or $i(v)=i_{k}$ (see Notation
\ref{iv} for the definition of $i(v)$). The latter means that
$e(v)=\eik$.

 In this case  $k$ procrastinates with respect to $j$,
which entitles us to a regain of $s_kn$. Note that there
are at most $s_k-1$ vertices $v \in \skipmj$ that are tails of long edges, and satisfy $k(v)=k$.
So, distributing this gain among the vertices $v \in \skipmj$
that are tails of long edges, and satisfy $k(v)=k$, each gets at least a gain of $n$. Remembering that $\lambda_j<n$, we are fulfilling the requirement of ``$\lambda_j$ gain in $G^b_j$ for every vertex in $\skipmj$''.

The splitting of the gain between $G_j$ and $G_k$ is done in this
case according to a still finer classification into subcases:

\begin{enumerate}

\item $\eij$ and $\eik$ cross (see Definition \ref{crossing}.  We do not discern in this case
between the cases  $tail(\eik) \in M$ and $tail(\eik) \in W$ - see
Figures 4 and 5 for the two possibilities. We give a gain of $n-1$
to $G^b_j$, saving $1$ for a fine point below (see remark after case
(3)). By Observation \ref{lambdalessthann}  we are giving $G^b_j$ at
least $\lambda_j$, as required.
\item $\eij$ and $\eik$ are parallel.
Here $k$ is procrastinating with respect to $j$, and thus we are
entitled to a gain of $s_kn$. This is the same as Case (1), with the
roles of $j$ and $k$ reversed. This regain (that is shared between
stages $j$ and $k$) was considered in (1) for  stage $i_k$, and
hence we do not distribute regains for this case. But recall that
$G_j^b$ gets in stage $i_k$ its share of $(s_k-1)(\mu_k+\lambda_k)$
(keep in mind that the roles of the indices $j$ and $k$ are
reversed). By Lemma \ref{lambdasmall} this quantity is at least
$\lambda_j$ for each such vertex.\\
\end{enumerate}

\item $\eij \subseteq \rik$.  Then necessarily $k=\omega$ (see Figure 6).

With such vertices we associate a regain on double counting in the estimate $n|\rik|$ towards calculating
$|\fik|$, of all edges in $(M_j \cup \skipmj \cup \Tjm)\times
\{tail(\eij)\}$. The number of these edges is $\mu_j+\ljm+\tjm$. We give $G^b_j$ the amount of $h(j)\lambda_j$,
where $h(j)$ is the number of vertices in $\skipmj$ having $k(v)=\omega(j)$.

Note that no regain of this type is counted more than once. To see
this it is best to view the regain associated with each vertex  $v$
of this type as a regain on the calculation of $|F_{i(v)}|$ itself,
rather than using the inequality  $|F_{i(v)}| \le |F_{i_k(v)}|$.
Viewed this way, the sets of edges (which are actually stars) on
which there is double counting in $|F_{i(v)}|$ are disjoint for
different $v$'s. Note also that by
Lemma \ref{lambdalessthanmu} for each vertex of the present type we are adding at least $\lambda_j$ to $G^b_j$, as required. \\
\end{enumerate}

\item $k(v)=j$, meaning that $v=head(\eij)$.

On this vertex we have the same regain as on vertices of type (2b), with a gain of $\mu_j+\ljm+\Tjm$ given to $G^b_j$.

We have to be careful in this calculation, since in this case there
is danger of considering the double counting of an edge twice. Here
it may happen that for distinct $j_1$ and $j_2$  the vertices
$head(e_{i_{j_1}})$ and $head(e_{i_{j_2}})$  both represent the same
set of edges, namely $|F_{i_\omega}|$. Since one side in each edge
considered is $tail(\eij)$, this can happen only in one case: when
$\omega(j_1)=\omega(j_2)$ for indices $j_1 \neq j_2$, and that
$tail(e_{i_{j_1}})$ and $tail(e_{i_{j_2}})$ are on different sides.
In this case the double counting on the edge
$(tail(e_{i_{j_1}}),tail(e_{i_{j_2}}))$ is taken
 into account twice, while it should have been taken only once. In
this case we can compensate for this double-double counting in the
following way. Without loss of generality assume that $j_1 <j_2$.
Since $\omega(j_1)=\omega(j_2)$, the index $j_2$ procrastinates with
respect to $j_1$, which means that we are entitled to a gain of $n$
in the calculation of $|F_{i_{j_2}}|$, hence a gain of $s_{j_2}n$ in
the total sum. We only used $s_{j_2}\lambda_{j_1}$, and since
$s_{j_2} \ge 1$ and $\lambda_{j_1}<n$ (see Observation
\ref{lambdalessthann}), we have the desired compensation.
\end{enumerate}
\vspace{0.3cm}

Note that the regains given above to $G^b_j$ cover all of $L_j^b$.

\begin{figure}
  \centering
  \includegraphics[width=.375\linewidth]{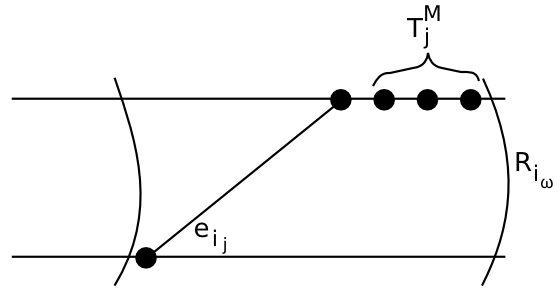}
  \captionof{figure}{Case 2(b)}
  \label{fig:5}

\end{figure}

\subsection{Another  regain on double counting}

We are entitled to another type of regain, on edges containing $A_j$
vertices. In the calculation of $|\fij|$ all edges between
$E_j[A_j]$ and $M \cap R_j$ are counted twice, so we are entitled to
a regain of $\mu_j\alpha_j$ on $|\fij|$, and thus of
$s_j\mu_j\alpha_j$ in total. In order to avoid considering this
double counting more than once we do not take into account vertices
contained in $j$-short edges - see Case (2b) above. Thus the regain
is $s_j(\mu_j-\sigma_j)\alpha_j$, which for ease of later
calculations we shall replace by the possibly smaller

 \begin{equation}\label{doublecountingAj} (s_j-1)(\mu_j-\sigma_j)\alpha_j
 \end{equation}

 \begin{example} To see why giving gains to the earlier indices is necessary, consider again Example \ref{stealexample}.
    There $L_1=q^2$, which is regained by a double count argument for $i_2$ (the second short edge).
In the baseline argument  $|F_{i_2}|$, and with it $|F_k|,~k \in S_2$, $k>2$ are estimated as $|R_{i_2}|n$. In this calculation all $q$ edges $m_cw_1$ in $R_{i_2}$ are double counted, so there is a gain of $q$ in the calculation of $|F_{i_2}|$, resulting in a gain of $s_2q=q^2$ in the baseline calculation - precisely $L_1$.

\end{example}

\section{Collecting the $G^a_j$ gains}

 \begin{lemma}\label{lowerboundongain}
$m-i_j \ge \ljm-\alpha_j$.
\end{lemma}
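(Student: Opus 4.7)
The plan is to count the vertices of $\skipmj$ according to when the algorithm matches them. The key structural claim is: every vertex $v\in\skipmj$ that is matched by some $e_i$ with $i<i_j$ is in fact the head of a short edge $\eik$ with $k<j$, and hence is a type-$(1)$ vertex contributing to $\alpha_j$.

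To establish the claim, first rule out $v=tail(e_i)$. Such a tail would lie in $M$, giving $v=m_{a_i}$; since $a_i\le a_{i_j}$ for $i<i_j$ while $v\in\skipmj$ has $M$-index $\ge a_{i_j}$, we would need $a_i=a_{i_j}$ and $v=m_{a_{i_j}}$, yet then $v\in Z_{i_j}$ contradicts the defining property of $a_{i_j}$. So $v=head(e_i)$. Next rule out that $e_i$ is long. Because $\eij$ is short, its $M$-endpoint $m_{a_{i_j}+\ljm}=head(\eij)$ lies in $R_m$, forcing $a_{i_j}+\ljm\le a_m-1$ and hence $\skipmj\subseteq R_m$. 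If $e_i$ were long with its $M$-endpoint $v\in R_m$, then its $W$-endpoint $w_{b_i}=tail(e_i)$ would have to lie outside $R_m$, i.e.\ $b_i\ge b_m$; but $w_{b_i}$ is matched at step $i$, giving $b_m\ge b_{i+1}>b_i$, a contradiction. Hence $e_i$ is short and $v=head(\eik)$ for some $k<j$, so $k(v)=k<j$, which is precisely the condition for $v$ to be a type-$(1)$ vertex.

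With the claim in hand, I partition the $\ljm+1$ vertices of $\skipmj$ into four classes: (i) those matched at some step $i<i_j$; (ii) $\{head(\eij)\}$, matched at step $i_j$; (iii) those matched at some step $i$ with $i_j<i\le m-1$; (iv) those left unmatched. The claim identifies class (i) with the $\alpha_j$ type-$(1)$ vertices, and class (ii) has size $1$. Class (iv) is empty, since $\skipmj\subseteq R_m$ and every vertex of $R_m$ is matched by the end of the algorithm. Finally, each of the $m-1-i_j$ edges $e_{i_j+1},\ldots,e_{m-1}$ contributes exactly one $M$-vertex, so class (iii) has size at most $m-1-i_j$. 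Summing, $\ljm+1\le\alpha_j+1+(m-1-i_j)$, which rearranges to $\ljm-\alpha_j\le m-1-i_j\le m-i_j$.

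I expect the main obstacle to be the structural claim, particularly ruling out that $v$ is the head of a \emph{long} edge chosen before step $i_j$. This is exactly where the hypothesis that $\eij$ is itself a short edge enters essentially (it is used twice: first to deduce $\skipmj\subseteq R_m$, and then to force the would-be long edge's $W$-tail outside $R_m$). Everything after the claim is routine bookkeeping.
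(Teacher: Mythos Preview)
Your proof is correct and follows essentially the same approach as the paper. The paper's proof is a single sentence—``every vertex in $\skipmj\setminus A_j$ is matched by some edge $e_i$, $i\ge i_j$''—and you have supplied precisely the verification of that assertion (ruling out tails and long edges to show that pre-$i_j$ matches force type-(1)), together with the explicit count; the paper leaves both implicit.
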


\begin{proof}
This follows from the fact that every vertex in ${SKIP}^M_j \setminus A_j$  is matched by some
edge $e_i,~~i \ge i_j$.
\end{proof}

 By the lemma and the definition of $G^{BASIC}_j$  (see \eqref{firstnetgain}) we have:

\begin{equation} \label{netgainfornow} G^{BASIC}_j \ge (s_j-1)( \lijm -\alpha_j)n \end{equation}

\begin{lemma}\label{nistheking}
$n \ge \ljw+\tmj+\sigma_j+\twj+\alpha_j$.
\end{lemma}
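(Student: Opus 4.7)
The plan is to lower-bound $n=|W|$ by partitioning $W$ into three consecutive blocks and then forcing the first block to be large. By their very definitions, $W\cap R_{i_j}=\{w_1,\dots,w_{b_{i_j}-1}\}$, $\skipjw=\{w_{b_{i_j}},\dots,w_{b_{i_j}+\ljw}\}$ and $\Tjw=\{w_{b_{i_j}+\ljw+1},\dots,w_{b_{i_j}+\ljw+\tjw}\}$ are pairwise disjoint subsets of $W$, whence
\[
n \;\ge\; (b_{i_j}-1)+(\ljw+1)+\tjw \;=\; b_{i_j}+\ljw+\tjw.
\]

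The heart of the argument is to exhibit $\sigma_j+\alpha_j+\tjm$ distinct vertices inside $W\cap R_{i_j}$. I would classify them according to the location of their $M$-partner under $E_j$: (a) the $\sigma_j$ $W$-endpoints of edges of $\Sigma_j$, whose $M$-endpoints lie in $M\cap R_{i_j}$; (b) the $\alpha_j$ tails $tail(e_{i_k})$ of the type (1) edges, whose heads lie in $\skipjm$; (c) the $\tjm$ $W$-partners (via $E_j$) of the vertices of $\Tjm$, whose $M$-endpoints lie in $\Tjm$. Because $M\cap R_{i_j}$, $\skipjm$ and $\Tjm$ are pairwise disjoint subsets of $M$, the three families of edges are pairwise disjoint, and since $E_j$ is a matching, the three groups of $W$-endpoints are distinct. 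Each such $W$-endpoint in fact lies in $W\cap R_{i_j}$: for (a) by the definition of $\Sigma_j$; for (b) the head sits in $M$, so the tail sits in $W$, and Observation \ref{tail} places it in $R_{i_j}$; for (c) the $M$-endpoint lies outside $R_{i_j}$, so Observation \ref{tail} forces the tail to be the $W$-endpoint and to lie in $R_{i_j}$.

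Combining, we would get $b_{i_j}-1\ge \sigma_j+\alpha_j+\tjm$, so $n\ge \ljw+\tjw+\sigma_j+\alpha_j+\tjm+1$, which is in fact one stronger than the claim. The only place requiring some care is family (c): one must use $\Tjm\cap R_{i_j}=\emptyset$ to rule out that the tail of such an edge is its $M$-endpoint, together with $\Tjm\subseteq Z_{i_j}$ to guarantee that every vertex of $\Tjm$ actually lies on an edge of $E_j$.
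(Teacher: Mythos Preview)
Your argument is correct and follows essentially the same route as the paper: the paper's one-line proof simply names five pairwise disjoint subsets of $W$ (namely $E_j[A_j]$, $\skipjw$, $E_j[\Tjm]$, $\Tjw$, and $W\cap\bigcup\Sigma_j$) whose sizes are the five summands, which is exactly what your three-block decomposition together with the embedding of groups (a), (b), (c) into $W\cap R_{i_j}$ accomplishes. Your write-up is in fact more careful---you spell out why $E_j[\Tjm]\subseteq W\cap R_{i_j}$ via Observation~\ref{tail}, and you correctly pick up the extra $+1$ from $|\skipjw|=\ljw+1$---whereas the paper tacitly identifies $A_j$ with its image $E_j[A_j]$ in $W$.
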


This follows from the fact that that $\alpha_j, \tjm, \tjw, \tljw$ and $\sigma_j$ are sizes of  disjoint subsets of $W$, namely $A_j, \skipjw,  E_j[\Tjm], \Tjw$ and $\bigcup \Sigma_j \cap W$.

The regain in \eqref{regainontype1}, $(s_j-1)(n-\mu_j-\tljm)\alpha_j$,
together with the regain of \eqref{doublecountingAj},  $(s_j-1)\alpha_j(\mu_j-\sigma_j)$,
and $G^{BASIC}_j$ sum up to

$$(s_j-1)(\tljm -
\alpha_j)n+(s_j-1)(n-\mu_j-\tljm)\alpha_j+(s_j-1)\alpha_j(\mu_j-\sigma_j)
$$

and we need to show that this sum, that is a lower bound for $G^a_j$,  is at least $L^a_j$. Namely, we have to show that

$$(s_j-1)(\tljm -
\alpha_j)n+(s_j-1)(n-\mu_j-\tljm)\alpha_j+(s_j-1)\alpha_j(\mu_j-\sigma_j) \ge (s_j-1)\ljm \lambda_j.
$$

Canceling out the term $s_j-1$ and additive terms, we need to prove:

$$\ljm(n - \alpha_j)-\sigma_j\alpha_j\ge \ljm\lambda_j$$

By Lemma \ref{nistheking} $\lambda_j \le n -\alpha_j -\sigma_j$.
Thus it is enough to show that $\ljm(n - \alpha_j)-\sigma_j\alpha_j
\ge \ljm(n-\alpha_j-\sigma_j)$, which follows from the fact that
$\alpha_j \le \ljm$ ($A_j$ being contained in $\skipmj$).
\\

This shows that $G^a_j \ge L^a_j$, thereby completing the proof of
Theorem \ref{maintheorem}.

\section{Proof of Theorem \ref{generalr3}}
Let $\cf$ be a collection of hypergraphs satisfying the condition of
the theorem. Order the vertices of the first side $V_1$ as  $v_1,
\ldots ,v_n$. By Lemma \ref{shifting} we may assume that all $F_i$
are shifted with respect to this order. Let $i_1$ be such that
$F_{i_1}$ has maximal degree at $v_1$ among all $F_i$'s. Then we
choose $i_2 \neq i_1$ for which $F_{i_2}$ has maximal degree at
$v_2$ among all $F_i,~~i \neq i_1$, and so forth. To save indices,
reorder the $F_i$'s so that  $i_j=j$ for all $j$. Let $H_j$ be the
set of $2$-edges incident with $v_j$ in $F_j$. It clearly suffices
to show that the collection $\ch=(H_j: j \le k)$ of subgraphs of
$K_{n,n}$ has a rainbow matching, so it suffices  to show that $\ch$
satisfies the conditions of Theorem \ref{maintheorem}. Assuming it
does not, since the sizes $|H_j|$ are descending, $\sum_{k-t <j \le
k} |H_j|= \sum_{k-t <j \le k} deg_{F_j}(v_j)\le t(t-1)n$ for some $t
< k$. We shall reach a contradiction to the assumption that
$|F_k|>(k-1)n^2$.\

 Write $m$ for  $|H_k|$. Clearly $$ \sum_{j \le k-t}deg_{F_k}(v_j) \le (k-t)n^2$$ and by the order by which $F_j$ were chosen
$$ \sum_{k-t <j \le k}deg_{F_k}(v_j) \le
\sum_{k-t <j \le k} deg_{F_j}(v_j)\le t(t-1)n$$
Since $\sum_{k-t <j \le k}deg_{F_k}(v_j) \ge mt$, this implies that $m \le n(t-1)$.

By the shifting property, $$\sum_{k <j \le n} deg_{F_k}(v_j)\le m(n-k)\le n(t-1)(n-k)$$
And so: $$\sum_{j>k-t}deg_{F_k}(v_j) \le t(t-1)n+(t-1)n(n-k)=n(t-1)(t+n-k)\le (t-1)n^2$$

Hence $$|F_k|=\sum_{j \le k}deg_{F_k}(v_j)\le (k-t)n^2+(t-1)n^2=(k-1)n^2$$
Which is the desired contradiction.


\section{Conjecture \ref{sizecondition} for large $n$}

\begin{theorem}
For every $r$ and $k$ there exists $n_0=n_0(r,k)$ such that Conjecture \ref{sizecondition} is true for all $n>n_0$.
\end{theorem}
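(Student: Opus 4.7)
The plan is to argue by induction on $r$, with base cases $r = 2$ and $r = 3$ given by Theorems~\ref{mainbipartite} and~\ref{generalr3} (both valid for all $n$). Assume inductively that Conjecture~\ref{sizecondition} holds for $(r-1)$-partite hypergraphs and all $k' \ge 1$, provided $n \ge n_0(r-1, k')$. Given $\cf = (F_1, \ldots, F_k) \subseteq [n]^r$ with $|F_i| > (k-1)n^{r-1}$ for every $i$, I apply Lemma~\ref{shifting} to assume all $F_i$ are $r$-partitely shifted.

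The main step is to find distinct vertices $v_1, \ldots, v_k \in V_1$ and a bijection $\sigma \colon [k] \to [k]$ such that $\deg_{F_{\sigma(j)}}(v_j) > (k-1)n^{r-2}$ for every $j$. Given such a system, the links $L_j := \{f \in V_2 \times \cdots \times V_r : \{v_j\} \cup f \in F_{\sigma(j)}\}$ form an $(r-1)$-partite family with $|L_j| > (k-1)n^{(r-1)-1}$, so the inductive hypothesis yields a rainbow matching $(f_j)$ of the $L_j$'s, and the edges $\{v_j\} \cup f_j \in F_{\sigma(j)}$ are pairwise disjoint (the $v_j$'s being distinct and the $f_j$'s disjoint in $V_2 \cup \cdots \cup V_r$), giving the desired rainbow matching of $\cf$.

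Finding $(v_j, \sigma)$ reduces to a bipartite matching on the auxiliary graph $B \subseteq V_1 \times [k]$ with $(v, i) \in B$ iff $\deg_{F_i}(v) > (k-1)n^{r-2}$. By shiftedness, $\deg_{F_i}(v_1^{(1)}) \ge |F_i|/n > (k-1)n^{r-2}$, so $v_1^{(1)}$ lies in the common neighbourhood of all indices. If Hall's condition holds in $B$, we are done; otherwise Hall yields $S \subseteq [k]$ with $|N_B(S)| < |S|$, which I take to be minimal, so $|S| \ge 2$. Shiftedness makes $T := N_B(S)$ an initial segment of $V_1$ with $|T| \le |S| - 1 \le k - 1$, and $\sum_{v \in T}\deg_{F_i}(v) \le |T|\, n^{r-1} \le (k-1)n^{r-1}$, so the strict inequality $|F_i| > (k-1)n^{r-1}$ forces each $F_i$, $i \in S$, to contain at least one ``light'' edge whose first coordinate lies outside $T$. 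In the extremal subcase $|S| = k$, $|T| = k-1$, I pick a light edge $e_k \in F_k$ and apply a secondary Hall analysis to the bipartite graph $B' := B \cap (T \times [k-1])$ to assign distinct $v_j \in T$ to each $j \in [k-1]$ with $\deg_{F_j}(v_j) > (k-1)n^{r-2}$. The restricted link of $v_j$ in $F_j$, intersected with $V \setminus e_k$, then has size at least $\deg_{F_j}(v_j) - O(n^{r-2}) > (k-2)(n-1)^{(r-1)-1}$ once $n$ is sufficiently large. Applying the $(r-1, k-1)$ inductive hypothesis to these restricted links produces the required $k-1$ disjoint edges which, together with $e_k$, form a rainbow matching of $\cf$. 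Subcases $|S| < k$ reduce to rainbow-matching problems on strictly smaller families via induction on $k$.

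The main obstacle is the extremal subcase: the slack $|F_i| - (k-1)n^{r-1}$ can be as small as a single edge, so light edges are rare and must be chosen compatibly with the secondary bipartite matching on $B'$. The graph $B'$ may itself exhibit Hall failure, requiring iteration of the same dichotomy on a smaller problem; moreover, verifying that the restricted links retain size above $(k-2)(n-1)^{(r-1)-1}$ forces $n_0(r, k)$ to be chosen sufficiently large to absorb the $O(n^{r-2})$ error terms incurred at each restriction and at each level of the iteration.
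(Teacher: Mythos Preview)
Your inductive scheme has a genuine gap at the key inequality in the extremal subcase. You assert that the restricted link satisfies
\[
\deg_{F_j}(v_j) - O(n^{r-2}) > (k-2)(n-1)^{(r-1)-1},
\]
but the only lower bound you have on $\deg_{F_j}(v_j)$ is the threshold $(k-1)n^{r-2}$ coming from membership in $B$. Removing the $r-1$ vertices of $e_k$ from $V_2,\ldots,V_r$ deletes $n^{r-1}-(n-1)^{r-1}\sim (r-1)n^{r-2}$ tuples from each link, so your restricted link is only guaranteed to have size exceeding $(k-1)n^{r-2}-(r-1)n^{r-2}=(k-r)n^{r-2}$. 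In the inductive step you have $r\ge 4$, hence $k-r\le k-4<k-2$, and for large $n$ this falls strictly below $(k-2)(n-1)^{r-2}\sim(k-2)n^{r-2}$. The $O(n^{r-2})$ loss is of the \emph{same} order as the slack you are trying to exploit, and its constant $r-1\ge 3$ is too large; taking $n$ large does not help. A concrete obstruction: with $r=4$, $k=3$, if each $F_i$ has $\deg_{F_i}(v_1)=\deg_{F_i}(v_2)=2n^{2}+1$ and all other $V_1$-degrees at most $2n^{2}$ (consistent with $|F_i|>2n^{3}$ and shiftedness), then after deleting $e_3$ your restricted links can have size below $(n-1)^{2}$, and the $(r-1,k-1)=(3,2)$ hypothesis does not apply.

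Separately, the phrases ``iteration of the same dichotomy'' when $B'$ fails Hall, and ``Subcases $|S|<k$ reduce to rainbow-matching problems on strictly smaller families via induction on $k$'', are not arguments: you have not specified what smaller instance you pass to, nor verified that its size hypotheses are met. For contrast, the paper's proof avoids induction on $r$ entirely. It fixes the set $A$ of the first $k-1$ vertices in \emph{each} side, observes that only $O(n^{r-2})$ edges meet $A$ twice, selects for $i\le k-1$ edges $e_i\in F_i$ with $e_i\cap A=\{x_i\}$ at distinct points $x_i$, picks $e_k\in F_k$ missing $\{x_1,\ldots,x_{k-1}\}$, and then uses shiftedness to push each $e_i$ entirely inside the $(r(k-1))$-point set $A$ while avoiding $e_k$ and the earlier $e'_j$. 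This sidesteps any degree-threshold bookkeeping.
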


\begin{proof}
By Lemma \ref{shifting} we may assume that all $F_i$'s are shifted.
Let $A_i$ consist of the first $k-1$ vertices in $V_i$~($i \le r$), and let $A =\bigcup_{i\le r}A_i$. Since the number of edges
meeting $A$ in two points or more is $O(n^{r-2})$, for large enough $n$
for each $i$ there exist at least $k-1$ points $x$ in $A$ such that $e \cap A=\{x\}$ for some $e \in F_i$. Hence we can choose edges
 $e_i \in F_i$ and distinct  points $x_i \in A$ ~$(i \le k-1)$ such that $e_i \cap A =\{x_i\}$.
 Since the number of edges going through $x_1,\ldots,x_{k-1}$ is no larger than $(k-1)n^{r-1}$,
 there exists an edge $e_k$ in $F_k$ missing $x_1,\ldots,x_{k-1}$. Using the shifting property,
  we can replace inductively each edge $e_i~, i\le k-1$, by an edge $e'_i\in F_i$ contained in
  $A$,  missing $e_k$ and missing all $e'_j,~j<i$. This yields a rainbow matching for $F_1, \ldots, F_k$.
\end{proof}

\section{Further conjectures}
Theorem \ref{mainbipartite} may be true also under the more general condition of degrees bounded by $n$.
\begin{conjecture}\label{degreecondition}
Let $d>1$, and let $F_1, \ldots, F_k$ be bipartite graphs on the same ground set,
satisfying $\Delta(F_i) \le d$ and $|F_i|>(k-1)d$. Then the system $F_1, \ldots, F_k$ has a rainbow matching.
\end{conjecture}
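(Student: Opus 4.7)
The plan is to adapt the proof of Theorem \ref{mainbipartite} by replacing the role of the side size $n$ with the maximum degree $d$. First, the hypothesis $|F_i|>(k-1)d$ together with $\Delta(F_i)\le d$ yields $\nu(F_i)\ge k$ by K\"onig's theorem: any vertex cover covers at most $d$ edges per vertex, so a cover of size $k-1$ accounts for at most $(k-1)d$ edges. Thus every $F_i$ contains a matching of size at least $k$.

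I would next attempt a greedy construction mirroring the proof of Theorem \ref{mainbipartite}. That proof selects a vertex $v_1\in M$ with $d_{F_1}(v_1)\ge k$, which is possible only because $|M|=n$ is fixed. In the present setting the vertex sets may be arbitrarily large, so no such vertex is guaranteed. Picking instead an edge $e_1\in F_1$ and deleting $V(e_1)$ from each $F_i$ removes at most $2d$ edges, leaving $|F_i-V(e_1)|>(k-3)d$. The greedy bound therefore degrades by $2d$ per step, so the construction breaks down around $k/2$ unless the edges are chosen carefully. An averaging argument over a $k$-matching in $F_1$ shows that some edge has endpoints supporting at most $2d|F_i|/|F_1|$ edges of $F_i$; when $|F_i|$ is close to the lower bound $(k-1)d$ this ratio is roughly $2$, which is borderline. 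Making the estimate tight simultaneously across all $i$ would be the first technical hurdle.

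A more promising approach is to imitate the algorithmic proof of Theorem \ref{maintheorem} with $n$ replaced by $d$. The sum-of-sizes condition $\sum_{i\in I}|F_i|>d|I|(|I|-1)$ is automatically implied by $|F_i|>(k-1)d$ for every $I\subseteq[k]$. The complication is that Lemma \ref{shifting} does \emph{not} preserve the max-degree bound: shifting an edge from $y$ to $x$ can push $d_{F_i}(x)$ beyond $d$. To circumvent this one would either construct a degree-preserving variant of shifting, or rerun the algorithm of Section~4 on the unshifted family, using the $k$-matching guaranteed inside each $F_i$ to ensure $\tilde{F_t}\neq\emptyset$ at every step.

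This shifting issue is the main obstacle I would expect. Once resolved, the loss/gain bookkeeping of Sections 5--7 should transfer \emph{mutatis mutandis}, since the appearances of $n$ there are essentially upper bounds on vertex degrees in the host graph $K_{n,n}$, a role that $d$ can play under the new hypothesis. Without a substitute for shiftedness, however, the six-type classification of vertices in $\skipmj$ and monotonicity-based bounds like Lemma \ref{lambdalessthanmu} lose their meaning and would have to be recast in purely algorithmic terms; this is where I expect the bulk of the technical work to lie.
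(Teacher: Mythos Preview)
The statement you are attempting to prove is presented in the paper as a \emph{conjecture}, not a theorem; the paper offers no proof of it, so there is no ``paper's own proof'' to compare against.

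Your proposal is not a proof either, and you are candid about this. You outline two approaches---a greedy argument in the style of Theorem~\ref{mainbipartite}, and an adaptation of the algorithmic proof of Theorem~\ref{maintheorem}---and for each you correctly identify the obstruction. The greedy approach loses up to $2d$ edges per step rather than $d$, so the induction collapses around step $k/2$; your averaging remark over a $k$-matching in $F_1$ does not repair this, since it only controls one coordinate at a time. For the second approach, your observation that bipartite shifting need not preserve the bound $\Delta(F_i)\le d$ is the genuine sticking point: the entire gain/loss bookkeeping of Sections~5--7 rests on shiftedness, and without it the notion of ``length of an edge,'' the sets $\skipmj$, and monotonicity lemmas such as Lemma~\ref{lambdalessthanmu} have no analogue. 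Replacing $n$ by $d$ in those estimates is not merely a matter of reinterpreting symbols; the structural role of shiftedness (downward closure in the product order) has no substitute under a degree hypothesis alone.

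In short, you have correctly diagnosed why Conjecture~\ref{degreecondition} resists the methods of the paper, which is presumably why the authors state it as open.
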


For $d=1$ this is false, since for every $k>1$ there are matchings
$F_1, \ldots, F_k$ of size $k$ not having a rainbow matching.

Theorem \ref{maintheorem} has a simpler counterpart, which we believe to be true:

\begin{conjecture}\label{simple}
If $F_i,~i \le k$ are subgraphs of $K_{n,n}$ satisfying $|F_i|\ge
in$ for all $i \le k$, then they have  a rainbow matching.
\end{conjecture}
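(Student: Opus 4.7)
My plan is to derive Conjecture~\ref{simple} from Edmonds' matroid intersection theorem, using the observation that the hypothesis $|F_i|\ge in$ is precisely what is needed to force $\nu(F_i)\ge i$.

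Set $\tilde E=\{(e,i):e\in F_i,\,1\le i\le k\}$ and put two matroids on $\tilde E$. Let $M_1$ be the partition matroid whose blocks are the color classes with capacity one, so $r_1(S)$ counts the number of colors appearing in $S$. Let $M_2$ be defined by the rank function $r_2(I)=\nu(\pi(I))$, where $\pi(e,i)=e$; this is a matroid rank function because $\nu$ is submodular on $E(K_{n,n})$ and $\pi$ commutes with unions. A common independent set of size $k$ in $M_1\cap M_2$ is exactly a choice of one edge from each $F_i$ with the chosen edges distinct and forming a matching, that is, a rainbow matching for $(F_1,\ldots,F_k)$. So it suffices to show that such a set exists.

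By Edmonds' min-max formula this reduces to verifying $r_1(S)+r_2(\tilde E\setminus S)\ge k$ for every $S\subseteq\tilde E$. Fix $S$, let $T\subseteq[k]$ be the set of colors appearing in $S$, and put $t=|T|$, so $r_1(S)=t$. Every $(e,i)$ with $i\notin T$ lies in $\tilde E\setminus S$, so $\pi(\tilde E\setminus S)\supseteq\bigcup_{i\notin T}F_i$. K\"onig's theorem supplies the key bound: since every vertex of $K_{n,n}$ covers at most $n$ edges of $F_i$, we have $\tau(F_i)\ge|F_i|/n\ge i$, and hence $\nu(F_i)=\tau(F_i)\ge i$. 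Therefore
\[
r_2(\tilde E\setminus S)\;\ge\;\nu\Bigl(\bigcup_{i\notin T}F_i\Bigr)\;\ge\;\max_{i\notin T}\nu(F_i)\;\ge\;\max_{i\notin T}i\;\ge\;k-t,
\]
where the last inequality holds because $[k]\setminus T$ has $k-t$ distinct elements of $[k]$. Adding, $r_1(S)+r_2(\tilde E\setminus S)\ge t+(k-t)=k$, as required.

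The three ingredients---matroid intersection, K\"onig's theorem, and the pigeonhole bound $\max([k]\setminus T)\ge k-|T|$---combine almost automatically once one notices that the hypothesis is tailored to produce $\nu(F_i)\ge i$. The only step I would verify with care is that $r_2(I)=\nu(\pi(I))$ is genuinely a matroid rank function, which follows from the submodularity of $\nu$ together with $\pi(I\cup J)=\pi(I)\cup\pi(J)$ and $\pi(I\cap J)\subseteq\pi(I)\cap\pi(J)$. If the conjecture is genuinely open as the paper indicates, the subtlety must lie either in some definition I have misread, or in the authors' wish for an elementary proof in the spirit of the shifting-based arguments used for Theorem~\ref{maintheorem}; my first sanity check would be to verify the argument on the extremal case $|F_i|=in$ with the $F_i$ as concentrated as possible.
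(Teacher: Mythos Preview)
The proof has a genuine gap: the function $r_2(I)=\nu(\pi(I))$ is \emph{not} a matroid rank function, because the matching number $\nu$ is not submodular on edge sets of a bipartite graph. A small counterexample: in $K_{2,2}$ with sides $\{a,b\}$ and $\{1,2\}$, take $A=\{a1,b1\}$ and $B=\{b1,b2\}$. Then $\nu(A)=\nu(B)=\nu(A\cap B)=1$ while $\nu(A\cup B)=2$ (via the matching $\{a1,b2\}$), so
\[
\nu(A\cup B)+\nu(A\cap B)=3>2=\nu(A)+\nu(B).
\]
Equivalently, matchings in a bipartite graph do not form the independent sets of a single matroid: from the matching $\{b1\}$ you cannot augment by any edge of the larger matching $\{a1,b2\}$ inside the edge set $\{a1,b1,b2\}$ when the only available edges share a vertex with $b1$. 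What is true is that bipartite matchings are the \emph{common} independent sets of two partition matroids (one per side); this is precisely why K\"onig's theorem is itself an instance of matroid intersection. Consequently, a rainbow matching for $(F_1,\ldots,F_k)$ is a common independent set of \emph{three} matroids (the colour partition matroid together with the two side partition matroids), and there is no Edmonds-type min--max formula for three matroids---indeed, the decision problem is NP-hard in general.

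Your derivation of the inequality $r_1(S)+\nu\bigl(\pi(\tilde E\setminus S)\bigr)\ge k$ via K\"onig's bound $\nu(F_i)\ge i$ is correct, but without $M_2$ being a matroid this inequality no longer certifies the existence of a size-$k$ common independent set. This is exactly why the conjecture is stated as open; the paper proves it only under the extra hypothesis $n>\binom{k}{2}$, by a shifting argument combined with Hall's theorem on an auxiliary $0$--$1$ matrix recording which vertices have large enough degree in each $F_i$.
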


\begin{theorem}
Conjecture  \ref{simple} is true for $n > \binom{k}{2}$.
\end{theorem}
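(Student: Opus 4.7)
The plan is to argue by induction on $k$, exploiting Lemma~\ref{shifting} to replace each $F_i$ by its bipartite shifting with respect to fixed orderings $m_1<\cdots<m_n$ of $M$ and $w_1<\cdots<w_n$ of $W$. The base case $k=1$ is immediate. For the inductive step, the identity $\binom{k}{2}-\binom{k-1}{2}=k-1\ge 1$ ensures that $n>\binom{k}{2}$ implies $n-1>\binom{k-1}{2}$, so the hypothesis of the theorem is preserved after passing to $K_{n-1,n-1}$.

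The inductive step reduces to the following claim: there exists an edge $e_k=(m_a,w_b)\in F_k$ such that for every $i<k$, the restriction $F_i':=F_i\cap K_{M\setminus\{m_a\},\,W\setminus\{w_b\}}$ satisfies $|F_i'|\ge i(n-1)$. Writing $\lambda^{(i)}_a=d_{F_i}(m_a)$ and $\mu^{(i)}_b=d_{F_i}(w_b)$, and letting $\varepsilon=1$ if $(m_a,w_b)\in F_i$ and $\varepsilon=0$ otherwise, one has $|F_i'|=|F_i|-\lambda^{(i)}_a-\mu^{(i)}_b+\varepsilon$, so the requirement becomes $\lambda^{(i)}_a+\mu^{(i)}_b-\varepsilon \le |F_i|-i(n-1)$; in the tight case $|F_i|=in$ this amounts to $\lambda^{(i)}_a+\mu^{(i)}_b-\varepsilon\le i$. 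Given such an $e_k$, the inductive hypothesis applied to $F_1',\ldots,F_{k-1}'$ in $K_{n-1,n-1}$ produces a rainbow matching $e_1,\ldots,e_{k-1}$, to which $e_k$ is appended to give a rainbow matching for the whole family.

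The main obstacle is establishing the existence of such an $e_k$, and here is where the hypothesis $n>\binom{k}{2}$ must enter. For each $i<k$ one calls an edge $(m_a,w_b)\in F_k$ \emph{$i$-bad} if the degree-sum condition above fails for $F_i$. Because $F_i$ is bipartitely shifted, both $\lambda^{(i)}_a$ and $\mu^{(i)}_b$ are non-increasing in $a$ and $b$ respectively, so the set of $i$-bad pairs is down-closed in the product order on $[n]\times[n]$; intersected with the down-closed shape of $F_k$, its size can be controlled from $|F_i|\ge in$ via a rearrangement-type double count, together with the shape of $F_k$. Summing these $i$-bad counts over $i=1,\ldots,k-1$ produces a total ``debt'' whose governing quantity is $\sum_{i<k} i=\binom{k}{2}$, and the slack $n>\binom{k}{2}$ is precisely what is needed to beat $|F_k|\ge kn$ and conclude that the union of $i$-bad sets does not cover $F_k$. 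The delicate part is controlling the overlap of the bad regions for different $i$ and tracking the degree budget in the worst case where each $|F_i|$ is at its threshold $in$---an accounting in the spirit of the loss/gain bookkeeping used in the proof of Theorem~\ref{maintheorem}.
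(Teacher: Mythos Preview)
Your inductive framework is a different route from the paper's, and the crucial step---the existence of an edge $e_k\in F_k$ whose removal from both sides leaves $|F_i'|\ge i(n-1)$ for every $i<k$---is not actually established. You yourself flag that ``the delicate part is controlling the overlap of the bad regions for different $i$'', but that is the entire content of the claim, and it is left undone. A naive union bound already fails: if each $F_i$ ($i\le k$) consists of all edges incident with $\{m_1,\dots,m_i\}$, then for every $i<k$ the $i$-bad edges of $F_k$ are exactly those with $a\le i$, contributing $in$ edges; summing over $i<k$ gives $\binom{k}{2}n$, which exceeds $|F_k|=kn$ as soon as $k\ge 3$. So the ``total debt governed by $\binom{k}{2}$'' heuristic does not become a proof without a genuine argument about how the bad regions nest, and none is supplied. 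Invoking ``an accounting in the spirit of'' Theorem~\ref{maintheorem} is not a substitute: that argument occupies several sections, and there is no indication that the same bookkeeping adapts here.

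The paper's proof is short and avoids induction entirely. After shifting, it records only one-sided degrees $d_{i,j}=\deg_{F_i}(w_j)$ and forms the $k\times k$ $0$--$1$ matrix with $m_{i,j}=1$ iff $d_{i,j}>k-j$. A permutation $\pi$ with $m_{\pi(j),j}=1$ for all $j$ immediately yields a rainbow matching by greedily matching $w_k,w_{k-1},\dots,w_1$ in that order. If no such permutation exists, Hall's theorem produces a set $J$ of $p$ columns and a set $I$ of $k-p+1$ rows forming an all-zero submatrix; taking $q=\max I\ge k-p+1$ and using that $j\mapsto d_{q,j}$ is non-increasing, a short convexity computation gives $|F_q|\le \binom{p}{2}+n(k-p)$. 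Since $n>\binom{k}{2}\ge\binom{p}{2}$, this forces $|F_q|<n(k-p+1)\le qn$, contradicting $|F_q|\ge qn$. The argument never needs the two-sided degree sums $\lambda_a^{(i)}+\mu_b^{(i)}$ on which your scheme rests.
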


\begin{proof}
As before, we assume that $F_i$ are all shifted. Number one side of
$K_{n,n}$ as $w_1, \ldots,w_n$. Let $d_{i,j}=deg_{F_{i}}(w_j)$. Let
$M$ be a $k \times k$ $0,1$ matrix, defined by: $m_{i,j}=1$ if
$d_{i,j}> k-j$ and $m_{i,j}=0$ otherwise. It is enough to find a
permutation $\pi: [k] \to [k]$ such that $m_{\pi(j),j}=1$ for all
$j$, since then one can match the vertices $w_j$ in $F_{\pi(j)}$
greedily, one by one, starting at $w_k$: at the $j$-th step, when $w_k, \ldots,w_{k-j+1}$
have already been matched, since $deg_{F_{\pi(j-k)}}(w_{k-j})\ge j$ there
exists at least one edge in $F_{\pi(k-j)}$ incident with $w_{k-j}$ that
can be added to the rainbow matching.

Assuming that there is no such permutation $\pi$, by Hall's theorem
there is a set $J$ of $p$ columns of $M$ and a set $I$ of $k-p+1$
rows, such that $m_{i,j}=0$ for all $i \in I,~j \in J$. Let $q$ be
the largest element of $I$. Then $q \ge k-p+1$. We shall show that $|F_q|<n(k-p+1)$, contradicting the assumption of the conjecture.

Let  $J=\{j_1,j_2,\ldots,j_p\}$, arranged in ascending order. Since $q \in I$, we have $d_{q,j_s}\le k-j_s$ for all $s \le p$. Since the sequence $d_{q,j}$ is non increasing in $j$,
we have:

\begin{equation}\label{js}
|F_q|=\sum_{j \le n}d_{q,j} \le n(j_1-1)+(j_2-j_1)(k-j_1)+(j_3-j_2)(k-j_2)+\ldots+(j_{p-1}-j_p)(k-j_{p-1})+(n-j_p+1)(k-j_p)
\end{equation}

Call the right hand side of (\ref{js}) $c(J)$. Suppose that there exists $s<p$ such that $j_{s}+1<j_{s+1}$. Then, moving $j_s$ to the right, namely replacing $j_{s}$ in $J$ by $j_{s}+1$, decreases $c(J)$
by $1$ (the decrease in the term corresponding to $j_s$) and increases by $j_{s+1}-j_s-1$
(corresponding to the increase in the terms between $j_{s}+1$ and $j_{s+1}$). This means that $c(J)$ has not decreased. Hence, writing $j$ for $j_p$, we have:
\begin{equation}\label{js2} c(J) \le  c(\{j-p+1,j-p+2,\ldots,j\})
\end{equation}
Writing $\gamma(j)$ for the right hand side of (\ref{js2}), we have: $$\gamma(j)=n(j-p)+(k-j+p-1)+(k-j+p+2)+\ldots +(k-j)+(n-j)(k-j)=$$
$${p\choose 2}+p(k-j)+n(j-p)+(n-j)(k-j)$$

This is a quadratic expression in $j$, which attains its maximum at one of the two extremes, $j=p$ or $j=k$.
In fact,  for both values of $j$ it attains the same value, ${p\choose 2}+n(k-p)$.
We have shown that $|F_q|<{p\choose 2}+n(k-p)$.
By the assumption $n>{k\choose 2}$ this implies that $|F_q|<n(k-p+1)$, which is the desired contradiction.

\end{proof}

To formulate yet another conjecture we shall use the following
notation:

\begin{notation}\hfill
\begin{enumerate}
\item
For a sequence $a=(a_i, ~1 \le i \le k)$ of real numbers we denote by $\overrightarrow{a}$ the sequence rearranged in non-decreasing order.
\item
Given two sequences $a$ and $b$ of the same length $k$, we write $a\le b$ (respectively $a <b$) if $\overrightarrow{a}_i \le \overrightarrow{b}_i$ (respectively $\overrightarrow{a}_i < \overrightarrow{b}_i$) for all $i \le k$.
\end{enumerate}
\end{notation}

Given subgraphs $F_i, ~~i \le k$ of $K_{n,n}$, define a $k \times n$
matrix $A=(a_{ij})$ as follows.  Order one side of the bipartite
graph as $v_1, v_2, \ldots, v_n$, and let $a_{ij}=deg_{F_i}(v_j)$.
The $i$-th row sum $r_i(A)$ of $A$ is then $|F_i|$. Thus, Theorem
\ref{maintheorem} can be formulated as follows:
\begin{theorem} If $\sum_{i \le j}\overrightarrow{r}_i >j(j-1)n$ for every $j \le k$ then there exists a permutation $\pi: [k] \to [k]$ such that $a_{i\pi(i)} \ge (1,2, \ldots,k)$.
\end{theorem}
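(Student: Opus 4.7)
The plan is to read off the permutation $\pi$ directly from the rainbow matching supplied by Theorem \ref{maintheorem}, using shiftedness as an amplifier that upgrades a single matching edge in $F_i$ into a full ``box'' of edges. By Lemma \ref{shifting} I may assume that all $F_i$ are simultaneously shifted with respect to the orders $v_1<\cdots<v_n$ on $M$ and $w_1<\cdots<w_n$ on $W$; each $F_i$ is then downward closed in the coordinate partial order, meaning $(v_a,w_b)\in F_i$ and $a'\le a$, $b'\le b$ imply $(v_{a'},w_{b'})\in F_i$. The hypothesis $\sum_{i\le j}\overrightarrow{r}_i>j(j-1)n$ for every $j\le k$ is merely a reformulation of $\sum_{i\in I}|F_i|>|I|(|I|-1)n$ for every $I\subseteq[k]$, because the left-hand side equals $\min_{|I|=j}\sum_{i\in I}|F_i|$; hence Theorem \ref{maintheorem} furnishes a rainbow matching $\{e_i\in F_i:i\in[k]\}$.

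I then push this matching into the corner $\{v_1,\ldots,v_k\}\times\{w_1,\ldots,w_k\}$. If some $e_i$ has its $M$-endpoint equal to $v_j$ with $j>k$, then since only $k$ vertices of $M$ are used by the matching there exists some $v_a\in\{v_1,\ldots,v_k\}$ that is unused; by downward closure the edge obtained from $e_i$ by swapping $v_j$ for $v_a$ still lies in $F_i$, and substituting it preserves the matching property because $v_a$ was unused. Iterating this operation, first on the $M$-side and then on the $W$-side, terminates after at most $2k$ such swaps and yields a rainbow matching supported on $\{v_1,\ldots,v_k\}\times\{w_1,\ldots,w_k\}$.

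Now write $e_i=(v_{\pi(i)},w_{\tau(i)})$; by construction $\pi$ and $\tau$ are permutations of $[k]$. Downward closure of $F_i$ combined with $(v_{\pi(i)},w_{\tau(i)})\in F_i$ yields $(v_{\pi(i)},w_b)\in F_i$ for every $b\le\tau(i)$, and so $a_{i,\pi(i)}=\deg_{F_i}(v_{\pi(i)})\ge\tau(i)$. Since $\tau$ is a permutation of $[k]$, for each $s\in[k]$ the set $\{i:\tau(i)\ge s\}$ has cardinality exactly $k-s+1$, hence $|\{i:a_{i,\pi(i)}\ge s\}|\ge k-s+1$. Equivalently, when the sequence $(a_{i,\pi(i)})_{i\in[k]}$ is sorted non-decreasingly, the $s$-th entry is at least $s$---which is precisely the condition $a_{i\pi(i)}\ge(1,2,\ldots,k)$ in the sense of the preceding notation.

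The only step requiring any care is the pushing in the second paragraph, which is safe because each replacement vertex is chosen to be unused by the current matching and terminates because each swap strictly increases the number of matching endpoints already lying in the desired corner. Everything else is a direct reading of shiftedness: it converts ``containment of one edge'' into ``containment of a full rectangle of edges,'' and this is exactly what turns the trivial lower bound $a_{i,\pi(i)}\ge 1$ coming from the edge $e_i$ itself into the required lower bound $a_{i,\pi(i)}\ge\tau(i)$ that drives the domination argument.
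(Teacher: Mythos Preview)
Your argument contains a genuine gap at the very first step. Lemma~\ref{shifting} says only that if the shifted system has a rainbow matching then so does the original; it does not say that the matrices $A$ before and after shifting are related in any useful way. In fact an $M$-side shift $s_{v_xv_y}$ (with $x<y$) can strictly increase $a_{ix}=\deg_{F_i}(v_x)$ and strictly decrease $a_{iy}$, so the permutation $\pi:[k]\to[k]$ you produce for the shifted system need not satisfy the required inequality for the original $A$. Concretely, take $k=2$, $n\ge 3$, and let $F_1=F_2=\{(v_j,w_\ell):j\in\{n-1,n\},\ \ell\in[n]\}$. Then $|F_i|=2n$, so the hypothesis holds, yet $a_{i1}=a_{i2}=0$ for $i=1,2$ and no permutation $\pi:[2]\to[2]$ can give $(a_{1\pi(1)},a_{2\pi(2)})\ge(1,2)$. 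Thus the statement, read literally with $\pi:[k]\to[k]$ and an arbitrary vertex ordering, is false, and your reduction to the shifted case cannot be innocuous.

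The paper itself offers no separate proof: it simply asserts that this is a reformulation of Theorem~\ref{maintheorem}. What is evidently intended is either (i) an injection $\pi:[k]\to[n]$ rather than a permutation of $[k]$, or (ii) the statement for the bipartitely shifted $F_i$, which by Lemma~\ref{shifting} is the only case needed for Theorem~\ref{maintheorem}. Your argument is essentially correct under either reading. For (ii) your proof works verbatim. For (i), shift only on the $W$-side; this leaves every $a_{ij}=\deg_{F_i}(v_j)$ unchanged. Apply Theorem~\ref{maintheorem} to the $W$-shifted system to get a rainbow matching $e_i=(v_{\sigma(i)},w_{\tau(i)})$, push the $W$-endpoints into $\{w_1,\dots,w_k\}$ exactly as you describe (now legitimate, since only $W$-shiftedness is used), and conclude $a_{i\sigma(i)}\ge\tau(i)$ with $\tau$ a permutation of $[k]$ and $\sigma:[k]\to[n]$ injective. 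The domination $(a_{i\sigma(i)})\ge(1,\dots,k)$ follows as in your last paragraph. What you cannot get without $M$-shifting is $\sigma([k])\subseteq[k]$, and the counterexample above shows this is unavoidable.
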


We believe that the following stronger conjecture is true:

\begin{conjecture} If $\sum_{i \le j}\overrightarrow{r}_i >j(j-1)n$ for every $j \le k$ then there exists a permutation $\pi: [k] \to [k]$ such that
$\sum_{i \le j}\overrightarrow{a}_{i\pi(i)} >j(j-1)$ for every $j$.
\end{conjecture}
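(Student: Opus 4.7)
The plan is to reduce to shifted matrices (via a version of Lemma \ref{shifting}) and then attack the strengthened Hall-type condition. I would first verify that shifting preserves both hypothesis and conclusion: row sums are invariant, and a modification of the argument in Lemma \ref{shifting} shows that a permutation witnessing the partial-sum condition in the shifted collection lifts back to the original. After shifting, each row is non-increasing, $a_{i1}\ge a_{i2}\ge\cdots\ge a_{in}$, and, as in the reduction used in the proof of Theorem \ref{generalr3}, the sought $\pi:[k]\to[k]$ may be assumed to use only the first $k$ columns, so the problem reduces to the $k\times k$ submatrix $B=(a_{ij})_{i,j\le k}$.

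The key reformulation is that the conclusion is equivalent to the symmetric condition $\sum_{i\in S}a_{i\pi(i)}>|S|(|S|-1)$ for every nonempty $S\subseteq[k]$. This exactly parallels the hypothesis $\sum_{i\in I}r_i>|I|(|I|-1)n$ with $n$ replaced by $1$, and suggests a defect version of Hall's theorem. I would attempt the contrapositive: assuming no $\pi$ satisfies the conclusion, extract by an LP-duality or minimax argument a single critical set $I_0\subseteq[k]$ such that every assignment of distinct columns in $[k]$ to rows of $I_0$ totals at most $|I_0|(|I_0|-1)$. Equivalently, the $|I_0|\times k$ submatrix $B_{I_0}$ has maximum partial matching weight at most $|I_0|(|I_0|-1)$.

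The closing step would translate this structural constraint on $B_{I_0}$ into an upper bound $\sum_{i\in I_0}r_i\le|I_0|(|I_0|-1)n$, contradicting the hypothesis. Shiftedness is crucial: every entry $a_{ij}$ with $j>k$ is bounded above by $a_{ik}$ (the last column of $B$), so each row sum $r_i$ is controlled by the entries of $B$ plus an overflow term of $(n-k)a_{ik}$. Combined with the partial-matching bound on $B_{I_0}$, this should give the desired contradiction after careful accounting over $I_0$.

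The main technical difficulty I anticipate is the extraction of the universal $I_0$, and the conversion of a bound on partial matchings of $B_{I_0}$ into a bound on full row sums, which must handle the overflow columns uniformly. The proof of Theorem \ref{maintheorem} already required a delicate algorithmic gain/loss balance, and this partial-sum strengthening will likely demand an even more intricate version of that analysis. As a fallback I would modify the algorithm of Theorem \ref{maintheorem} so that at each step it selects an edge whose $M$-endpoint has maximum residual $F$-degree (rather than maximum ``length''), and attempt to show that the resulting rainbow matching itself witnesses the partial-sum conclusion after re-doing the accounting.
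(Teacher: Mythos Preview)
This statement is presented in the paper as an open \emph{conjecture}; the authors give no proof, so there is nothing to compare against.

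The central gap in your proposal is the extraction of a universal critical set $I_0$. From ``no permutation $\pi$ is good'' you want a fixed $I_0$ such that \emph{every} injection $\sigma\colon I_0\to[k]$ has weight $\sum_{i\in I_0}a_{i\sigma(i)}\le |I_0|(|I_0|-1)$. No standard LP duality or defect-Hall theorem delivers this quantifier swap, and it is false even for shifted (row-monotone) matrices. Take $k=2$ and $B=\left(\begin{smallmatrix}3&0\\1&0\end{smallmatrix}\right)$ for the first two columns, all later columns zero: both permutations produce a value $0$, so neither is good; yet $I_0=\{1\}$ has the injection $1\mapsto 1$ with weight $3>0$, $I_0=\{2\}$ has $2\mapsto 1$ with weight $1>0$, and $I_0=\{1,2\}$ has the identity with weight $3>2$, so no $I_0$ is universal. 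This matrix violates the row-sum hypothesis and hence does not contradict the conjecture---but your extraction step never invoked that hypothesis, and you have not said how it would enter. Without a valid extraction, the subsequent conversion to a row-sum bound (itself only asserted: the overflow term $(n-k)a_{ik}$ does not combine with a partial-matching bound in any evident way) has nothing to stand on.

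Your shifting reduction is also unverified. Lemma~\ref{shifting} lifts a rainbow \emph{matching}, but here you must lift a permutation whose degree values $a_{i\pi(i)}$ satisfy partial-sum inequalities; shifting on the $v$-side genuinely alters individual degrees, and the edge-swap trick of Lemma~\ref{shifting} does not obviously preserve that inequality structure. The fallback algorithmic plan is a hope, not an argument. The conjecture remains open.
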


{\bf Acknowledgements:} We are grateful to  Roy Meshulam for the
proof of Theorem \ref{af} and for pointing out to us the relevance
of the shifting method. We are also grateful to Zoltan F\"uredi and
Ron Holzman for helpful information, and to Eli Berger for
stimulating discussions.

\end{document}